      \theoremstyle{plain}
      \newtheorem{theorem}{Theorem}[section]
      \newtheorem{lemma}[theorem]{Lemma}
      \newtheorem{prop}{Proposition}
      \theoremstyle{definition}
      \theoremstyle{conj}
      \newtheorem{conj}[theorem]{Conjecture}
      \theoremstyle{remark}
      \let\runauthor\@author
      \let\runtitle\@title
\begin{document}

%



   \author{Masoud Khalkhali and Sajad Sadeghi
\vspace{0.5cm}\\ Department of Mathematics,  University of Western Ontario, 
 London, Ontario, Canada
}
 \date{}


   \email{masoud@uwo.ca, ssadegh3@uwo.ca}
   

   \title{On  Logarithmic Sobolev Inequality for the Noncommutative Two Torus 
     }


   \begin{abstract}
     An analogue of Gross' logarithmic Sobolev inequality for a class of elements of noncommutative two tori is proved.
   \end{abstract}

   \date{}


   \maketitle

   \section{Introduction}
The subject of logarithmic Sobolev inequality has its roots in the paper of E. Nelson \cite{N}, where he proved the contractivity of the semi-group generated by the Gauss-Dirichlet form operator. Then, shortly after that L. Gross introduced logarithmic Sobolev inequalities in \cite{G} and using them gave a different proof of the contractivity of the semi-group generated by Gauss-Dirichlet form operator.

Let $\nu$ be the Gauss measure on $\mathbb{R}^{n}$ and   
 $$N:D(N) \subseteq L^{2}(\mathbb{R}^{n},\nu)\longrightarrow L^{2}(\mathbb{R}^{n},\nu), $$ be the Gauss-Dirichlet form operator defined by
$$
\left \langle Nf,g \right \rangle=\int_{\mathbb{R}^{n}}
 \bigtriangledown f(x) \cdot \bigtriangledown g(x)d\nu(x)
$$
where $
\left \langle Nf,g \right \rangle =\int_{\mathbb{R}^{n}} f(x) \overline{g(x)} d\nu(x)$,
and $\bigtriangledown f$ is the weak gradient of $f$.
Nelson showed that for $1<q\leq p< \infty$, if 
$$
e^{-2t}\leq\dfrac{(q-1)}{(p-1)},
$$ 
then $$\Vert e^{-tN} \Vert _{q\rightarrow p}\leq1,$$
where
$$\Vert e^{-tN} \Vert _{q\rightarrow p}=\text{sup} \left\lbrace  \Vert e^{-tN}f\Vert_{p}: f\in L^{2}(\nu)\cap L^{q}(\nu), \Vert f \Vert _{q} \leqslant 1 \right\rbrace. $$
This means that for $$
t\geqslant \ln \sqrt{\frac{p-1}{q-1}},
$$ 
$e^{-tN}$ is a contraction from $L^{q}(\mathbb{R}^{n},\nu)$ to $L^{p}(\mathbb{R}^{n},\nu)$. He even proved more. Indeed, he showed that $e^{-tN}$ is an unbounded operator from $L^{q}(\mathbb{R}^{n},\nu)$ to $L^{p}(\mathbb{R}^{n},\nu)$, if 
$$
t<  \ln \sqrt{\frac{p-1}{q-1}}.
$$

The classical Sobolev inequality states that for $f \in C^{\infty}_{c}(\mathbb{R}^{n})$,
\begin{equation} \label{clas}
\Vert f \Vert _{L^{q}(\mathbb{R}^{n}, dx)}\leq C_{p,n} \Vert \left |  \bigtriangledown f \right | \Vert _{L^{p}(\mathbb{R}^{n}, dx)}
\end{equation}
where $1\leq p < \infty$, $\dfrac{1}{q}= \dfrac{1}{p}- \dfrac{1}{n}$,  $dx$ is the Lebesgue measure and $C_{p,n}$ is a constant depending only on $n$ and $p$. So (\ref{clas}) implies that if the gradient of the function $f$ is in $L^{p}(\mathbb{R}^{n}, dx)$, then $f$ must be in $L^{q}(\mathbb{R}^{n}, dx)$. These inequalities strongly depend on the dimension of $\mathbb{R}^{n}$.

In \cite{G}, Gross proved a logarithmic Sobolev inequality
\begin{equation} \label{GRls}
\int_{\mathbb{R}^{n}} \left | f(x) \right | ^{2} \ln \left | f(x) \right |
d\nu (x)\leq \int_{\mathbb{R}^{n}}
\left | \bigtriangledown f(x) \right | ^{2} d\nu(x)+ \Vert f \Vert _{2}^{2} \ln \Vert f \Vert _{2}
\end{equation}
for $f \in L^{2}(\mathbb{R}^{n},\nu)$, and showed that this inequality is equivalent to Nelson's result of contractivity that we just mentioned. 

Unlike the classical Sobolev inequality, Gross' logarithmic Sobolev inequality is dimension independent. Using (\ref{GRls}) and the Poincar\'{e} inequality we see that if the gradient of the function $f$ is in $L^{2}(\mathbb{R}^{n},\nu)$, then $f$ is in the Orlicz space $L^{2} \ln  L( \nu) $. This somehow justifies the name logarithmic Sobolev inequality. Gross also derived in \cite{G} a weaker version of (\ref{GRls}), from Nirenberg's form of classical Sobolev inequality \cite{NI}. This  version, not surprisingly depends on the dimension.

 Since then people have given various proofs of logarithmic Sobolev inequalities by different methods: O. Rothaus \cite{ROT} has proved it using Jensen's inequality and the positivity of the lowest eigenfunction for a Sturm-Liouville boundary value problem with Dirichlet boundary conditions. R. A. Adams and F. H. Clarke also have given a simple proof based on calculus of variations \cite{ADCL}. 

One can replace $(\mathbb{R}^{n}, \nu)$ with a probability space $(X, \mu)$ and the Gauss-Dirichlet form with a densely defined positive quadratic form on $L^{2}(X,\mu)$, say $\mathcal{E}$. Then we say the logarithmic
Sobolev inequality holds for $\mathcal{E}$, if for $f \in Dom(\mathcal{E})$, 
$$
\int_{X} \left | f(x) \right | ^{2} \ln \left | f(x) \right |
d\mu (x)\leq \mathcal{E}(f,f)+ \Vert f \Vert _{2}^{2} \ln \Vert f \Vert _{2}.
$$
This way one can talk about logarithmic Sobolev inequalites on  Riemannian manifolds \cite{Path}.

F. Weissler has proved in \cite{W} a logarithmic Sobolev inequality 
on the circle. Indeed, using Fourier series he has shown that for a positive function $f$ in $L^{2}(\mathbb{T, \mu})$, where $\mathbb{T}$ is the unit circle and $\mu$ is the normalized Lebesgue measure, if 
$$f(\theta)= \sum_{n=-\infty}^{\infty} a_{n}e^{in \theta},$$
then 
\begin{equation}\label{wwe}
\int_{\mathbb{T}} f \log f d\mu \leq \sum_{n=-\infty}^{\infty} \vert n \vert \vert a_{n} \vert ^{2}+ \Vert f \Vert _{2}^{2} \log \Vert f \Vert _{2}. 
\end{equation}
Since $$ \sum_{n=-\infty}^{\infty} \vert n \vert  \vert a_{n} \vert ^{2} \leq \sum_{n=-\infty}^{\infty} \vert n \vert ^{2} \vert a_{n} \vert ^{2}=\Vert  \bigtriangledown f  \Vert ^{2}_{2}=\int_{\mathbb{T}} \left |\bigtriangledown f \right |^{2} d\mu,  $$
Weissler's result is even stronger than the original logarithmic Sobolev inequality
$$
\int_{\mathbb{T}} f \log f d\mu \leq \int_{\mathbb{T}} \left |\bigtriangledown f \right |^{2} d\mu+ \Vert f \Vert _{2}^{2} \log \Vert f \Vert _{2}.
$$

There is a useful survey of related topics and applications of logarithmic Sobolev inequalities by Gross in \cite{GR2}. One can also find more references therein.

Since the introduction of noncommutative geometry by Alain Connes in \cite{con} (see also \cite{con1}), noncommutative tori have proved to be an invaluable tool to understand and test many aspects 
of noncommutative geometry that is  not present in the commutative case.  The results are simply too many to be cited here. The present paper should be seen as a step in understanding  aspects of measure theory and analysis on noncomutative tori that have been largely untouched so far. The combinatorial challenges one faces in extending the logarithmic Sobolev inequality, at least in the form that we understand it,  seemd to us as a very interesting problem by itself.

Let $\theta \in \mathbb{R} \setminus \mathbb{Q}$. The universal C*-algebra generated by two unitaries $U,V$ such that $UV=e^{2\pi i \theta}VU$, is  called  the  irrational rotation algebra and is denoted by $A_{\theta}$. It is a simple algebra and has a unique positive faithful normalized trace $\tau$. $A_{\theta}$ is the noncommutative deformation of $C(\mathbb{T}^{2})$, the algebra of continuous functions on the 2-torus. More details about $A_{\theta}$ can be foun in \cite{con1, MK, Rif}. Let
$$
A_{\theta}^{\infty}=\left\lbrace 
\underset{(m,n)\in \mathbb{Z}^{2}}{\sum}  
 a_{m,n} U^{m} V^{n}: a_{m,n}  \text{ is rapidly decreasing} \right\rbrace.
$$
By rapidly decreasing we mean for all $k \in \mathbb{N}$,
$$
\underset{(m,n) \in \mathbb{Z}^{2}}{\text{Sup}} (1+m^{2}+n^{2})^{k} \vert a_{m,n} \vert ^{2} < \infty.
$$
$A_{\theta}^{\infty}$ is a dense subalgebra of the irrational
 rotation algebra and it is the analogue of $C^{\infty}(\mathbb{T}^{2})$, the algebra of smooth functions on the 2-torus. $A_{\theta}^{\infty}$ is
  called the noncommutative two torus. 
  
 Let $a= \underset{(m,n)\in \mathbb{Z}^{2}}{\sum}  
 a_{m,n} U^{m} V^{n} $ be in $A_{\theta}^{\infty}$. Then $$a^{*}= \underset{(m,n)\in \mathbb{Z}^{2}}{\sum}  
 \overline{a_{m,n}} V^{-n} U^{-m}$$
 $$=\underset{(m,n)\in \mathbb{Z}^{2}}{\sum}  
 \overline{a_{m,n}} \mathit{e}^{-2\pi imn \theta } U^{-m} V^{-n}$$
 $$= 
  \underset{(m,n)\in \mathbb{Z}^{2}}{\sum}  
 \overline{a_{-m,-n}} \mathit{e}^{-2\pi imn \theta } U^{m} V^{n}. $$
 So if $a=a^{*}$, we have 
\begin{equation} \label{sa}
a_{m,n}=\overline{a_{-m,-n}} \mathit{e}^{-2\pi imn \theta }.
\end{equation}
Moreover, if $b= \underset{(m,n)\in \mathbb{Z}^{2}}{\sum}  
 b_{m,n} U^{m} V^{n} \in A_{\theta}^{\infty}, $
we have 
$$ab=\underset{(m,n)\in \mathbb{Z}^{2}}{\sum}  
 c_{p,q} U^{p} V^{q},$$
 where \begin{equation} \label{pr}
  c_{p,q}= \underset{(m,n)\in \mathbb{Z}^{2}}{\sum}  
 a_{m,n} b_{p-m,q-n} \mathit{e}^{-2\pi i(p-m)n \theta } 
 \end{equation}    
    The unique trace $\tau$ on $A_{\theta}$ which plays
     the role of integration in the noncommutative setting, extracts the
     constant term of the elements of
     $A_{\theta}^{\infty}$, i.e. $\tau (a)=a_{0,0}$.
This trace can be used to define an $L^{2}$-norm on $A_{\theta}^{\infty}$ by
$$\Vert a \Vert_{2}^{2}:= \tau(a^{*}a) .$$
 Using (\ref{sa}) and (\ref{pr}) one can show $\Vert a \Vert_{2}^{2}= \underset{(m,n)\in \mathbb{Z}^{2}}{\sum} \vert a_{m,n} \vert ^{2}$. One can also define Sobolev norms on $A_{\theta}^{\infty}$. For more details see \cite{R}, where 
J. Rosenberg has developed the Sobolev theory on the noncommutative two torus. 
 
 In this paper we will use Weissler's method \cite{W} to prove a logarithmic Sobolev inequality for a class of elements of 
 the noncommutative two torus. 
 In section 2 we will prove some lemmas that we will need later on. In section 3 we will first state 
 our conjecture about a logarithmic Sobolev inequality on the noncommutative 2-torus and then 
 we will prove that conjecture for a class of elements of the noncommutative 2-torus. This would be the main result of this paper. Although we have not been able to prove 
 the logarithmic Sobolev inequality for arbitrary positive 
 elements, we think the inequality must hold for those elements as well. In section 4 we will try to generalize the proof of the main result to prove the conjecture, but in the middle of the way we will see that we will face a problem. We hope that we can bypass this problem in a follow up paper. 
\section{ Preliminaries}
   \label{prelim}
 In this section we will prove some technical lemmas that will be needed later on.  
\begin{lemma} \label{ana}
Let $G$ be an analytic function in some complex neghiborhood of the interval $[0,1]$. Suppose all the coefficients in the power series expansion of $G$ around $r=0$ are nonnegative. Then $G(1)\geq0$.
\end{lemma}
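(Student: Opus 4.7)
The plan is to reduce the claim to a classical theorem on power series with nonnegative coefficients, namely the Vivanti--Pringsheim theorem: if a power series $\sum a_n r^n$ has all nonnegative coefficients and finite radius of convergence $R$, then the real boundary point $r = R$ is necessarily a singular point of its sum.

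Let $G(r) = \sum_{n=0}^{\infty} a_n r^n$ on its disk of convergence $|r| < R$, and let $U$ be an open set on which $G$ is holomorphic with $[0,1] \subset U$. If $R \leq 1$, then $R \in [0,1] \subset U$, so $G$ would be holomorphic at $r = R$, contradicting Vivanti--Pringsheim. Hence $R > 1$, which implies in particular that the numerical series $\sum_{n=0}^{\infty} a_n$ converges absolutely. The identity theorem, applied to $G$ and the power series sum on the connected open set $U \cap \{|r| < R\}$ (which contains a neighborhood of $0$ where the two agree and also contains the path $[0,1]$), yields the analytic continuation identity $G(1) = \sum_{n=0}^{\infty} a_n$. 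Since each $a_n \geq 0$, the conclusion $G(1) \geq 0$ follows.

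The main point requiring care is ruling out $R = 1$, and this is precisely where the hypothesis that $G$ is analytic on a complex neighborhood of the \emph{closed} interval $[0,1]$ (rather than merely $[0,1)$) is used, as it forbids a boundary singularity at $r = 1$. If one prefers to avoid invoking Vivanti--Pringsheim, an alternative is to cover $[0,1]$ by finitely many overlapping disks on which $G$ is represented by a convergent Taylor series, and observe inductively that the Taylor coefficients of $G$ about any point of $[0,1]$ can be written as nonnegative linear combinations of the original $a_n$; evaluating successively along the chain propagates nonnegativity of the value from $0$ to $1$. Either route gives the lemma.
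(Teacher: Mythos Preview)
Your proof is correct, and your primary route via the Vivanti--Pringsheim theorem is genuinely different from the paper's argument. The paper does exactly what you sketch as your alternative: it covers $[0,1]$ by a finite chain of overlapping disks centered at $0=x_0<x_1<\cdots<x_n=1$, and shows by induction along the chain that \emph{all} Taylor coefficients of $G$ at each $x_i$ are nonnegative (since $G^{(k)}(x_i)=\sum_{j\ge k}\frac{G^{(j)}(x_{i-1})}{(j-k)!}x_{i}^{\,j-k}\ge 0$); taking $k=0$ at $x_n=1$ gives $G(1)\ge 0$. Your Pringsheim argument is shorter and more conceptual, trading the explicit disk-chaining for a classical black box; the paper's approach is fully elementary and in fact yields the stronger conclusion that $G^{(k)}(r)\ge 0$ for every $r\in[0,1]$ and every $k\ge 0$.

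Two small points of hygiene in your write-up: (i) the set $U\cap\{|r|<R\}$ need not be connected, so the identity theorem should be applied on the connected component containing $[0,1]$ (your parenthetical makes clear you have this in mind); (ii) the case $R=\infty$ should be separated out, since Pringsheim presupposes a finite radius --- but then the conclusion is immediate.
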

\begin{proof}
First we show that we can find 
finitely many points $x_{0}=0< x_{1}< x_{2}< \cdots < x_{n}=1$ in $[0,1]$ and 
finitely many discs $D_{0}, D_{1},D_{2}, \cdots,$ $D_{n}$ such that $x_{i}$ for $i=0,1, \cdots, n$,  is the 
center of $D_{i}$, $G$ has a power series expansion around 
$x_{i}$ on $D_{i}$ and $x_{i} \in D_{i-1}$, for $i=1,2, \cdots, n$. 
To show this let N be the open set in $\mathbb{C}$ containing $[0,1]$ on which $G$ is analytic.  Define
$$ 
F:[0,1] \longrightarrow \mathbb{R}^{>0}
$$
by sending $r \mapsto \text{dist}(r, N^{c})$.
$F$ is a 
continuous function on a compact set, so it attains its minimum. 
Let $\delta$ be the minimum of $F$ and 
$x_{0}=0, x_{1}=\dfrac{\delta}{2}, x_{2}=\delta, 
x_{3}=\dfrac{3\delta}{2}, \cdots, x_{n-1}=
\dfrac{(n-1) \delta}{2}, x_{n}=1$, where 
$n=\lfloor \dfrac{2}{\delta} \rfloor +1$. For 
$i=0,1, \cdots, n$, let $R_{i}$ be the radius of convergence of the power series
 expansion of $G$ around $x_{i}$, and $D_{i}$ be the disc centred at $x_{i}$ with radius $R_{i}$. For $i=0,1, \cdots, n$, we have $\dfrac{\delta}{2}< R_{i}$. So $x_{i} \in D_{i-1}$, for $i=1,2, \cdots, n$.

Let 
\begin{equation} \label{zero}
G(z)=\sum_{k=0}^{\infty} \dfrac{G^{(k)}(0)}{k!} z^{k}
\end{equation}
be the power series expansion of $G$ around $0$ on $D_{0}$. Since $x_{1} \in D_{0} $, we plug $x_{1}$ in (\ref{zero})for $k\geq0 $, $G^{(k)}(0)\geq0$, 
$$G(x_{1})=\sum_{k=0}^{\infty} \dfrac{G^{(k)}(0)}{k!} x_{1}^{k}\geq0$$ 
If we substitute $x_{1}$ into the derivative of (\ref{zero}), we will get
$$
G^{(1)}(x_{1})=\sum_{k=1}^{\infty} \dfrac{G^{(k)}(0)}{(k-1)!} x_{1}^{k-1}
$$
which is non-negative by the same reason. Differentiating (\ref{zero}) repeatedly, we can show that the derivatives of $G$ at $x_{1}$ which form the coefficients of the power series expansion of $G$ around $x_{1}$ on $D_{1}$ are nonnegative.

Repeating this argument, we can show that all derivatives of $G$ at each $x_{i}$ and in particular at $x_{n}=1$ are non-negative. So $G(1)\geq 0$.
\end{proof}
We will need the following standard and elementary result of  spectral theory in  C*-algebras.
\begin{prop} \label{Rudin}
Let $A$ be a C*-algebra, $x \in A$ and $N$ an open subset of $\mathbb{C}$ containing the spectrum of $x$, $\sigma(x)$. Then there exists $\delta > 0$, such that for $y \in A$, $\Vert y-x \Vert < \delta $ implies $\sigma(y) \subseteq N $.
\end{prop}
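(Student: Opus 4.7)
The plan is to prove upper semicontinuity of the spectrum via a standard compactness and Neumann series argument. The key observation is that $N^c$ is closed but not necessarily compact, so one first truncates it to a compact set by bounding the spectra of nearby elements.

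First I would fix some radius $R > \|x\|$ and set $K = N^c \cap \{\lambda \in \mathbb{C} : |\lambda| \le R\}$. This $K$ is a compact subset of $\mathbb{C}$ disjoint from $\sigma(x)$. For every $\lambda \in K$, the element $\lambda \cdot 1 - x$ is invertible in $A$. The map $\lambda \mapsto (\lambda \cdot 1 - x)^{-1}$ is continuous on the resolvent set (by the standard resolvent identity / Neumann series argument), hence continuous on the compact set $K$, so
\[
M := \sup_{\lambda \in K} \Vert (\lambda \cdot 1 - x)^{-1} \Vert < \infty.
\]

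Next I would choose $\delta := \min\{1/(2M),\, R - \|x\|\}$ and show that it works. Suppose $\Vert y - x \Vert < \delta$. Then $\Vert y \Vert \le \Vert x \Vert + \delta \le R$, so the spectral radius bound $|\lambda| \le \|y\|$ for $\lambda \in \sigma(y)$ forces $\sigma(y) \subseteq \{|\lambda| \le R\}$. It therefore suffices to exclude points of $K$ from $\sigma(y)$. For $\lambda \in K$, write
\[
\lambda \cdot 1 - y = (\lambda \cdot 1 - x)\bigl(1 - (\lambda \cdot 1 - x)^{-1}(y - x)\bigr).
\]
Since $\Vert (\lambda \cdot 1 - x)^{-1}(y - x)\Vert \le M \cdot \delta \le 1/2 < 1$, the factor in parentheses is invertible by the Neumann series, and hence $\lambda \cdot 1 - y$ is invertible. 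Thus $\lambda \notin \sigma(y)$, and combining the two cases we obtain $\sigma(y) \subseteq N$.

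The only mildly delicate point is the uniform bound on $\Vert (\lambda \cdot 1 - x)^{-1} \Vert$ over $K$; if one is content to cite it, the proof is short, otherwise it comes from a direct Neumann series estimate showing continuity of the resolvent map, combined with compactness of $K$. I do not anticipate any genuine obstacle here — this is a textbook fact, and the author evidently wishes to record it cleanly for later reference.
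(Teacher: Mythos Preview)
Your argument is correct and is the standard proof of upper semicontinuity of the spectrum via compactness and the Neumann series. The paper itself does not give a proof at all; it simply cites Rudin, \textit{Functional Analysis}, Theorem~10.20, which is essentially the argument you have written out in detail.
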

\begin{proof}
See \cite{Rud} Theorem 10.20.
\end{proof}
The following proposition will be needed in the proof of the main result of this paper. 
\begin{prop} \label{comp}
Let $a=\underset{\underset{(m,n)\neq (0,0)}{(m,n)\in \mathbb{Z} ^{2} }}{\sum} a_{m,n}  U^{m} V^{n}$ be  
in $A_{\theta}^{\infty}$, such that $a>0$, $a_{0}=1$ and at most finitely many number of $a_{m,n}$'s are nonzero. For $r \in \mathbb{C}$, we put $$x_{r}=\underset{\underset{(m,n)\neq (0,0)}{(m,n)\in \mathbb{Z}^{2}}}{\sum} a_{m,n} r^{(\vert m\vert +\vert n\vert)} U^{m} V^{n}$$
and $P_{r}(a)=1+x_{r}$. Then there is an open neighbourhood $W$ of $[0,1]$ in $\mathbb{C}$, such that for all $r$ in $W$, $\log P_{r}(a)$ can be defined.
\end{prop}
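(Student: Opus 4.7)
The plan is to realize $P_r(a)$, for $r \in [0,1]$, as the image of $a$ under a unital positive map $\phi_r$; this yields a uniform lower bound $P_r(a) \geq \varepsilon \cdot 1$, and Proposition \ref{Rudin} combined with the norm-continuity of $r \mapsto P_r(a)$ will then thicken $[0,1]$ to an open complex neighborhood $W$ on which $\log P_r(a)$ is defined via the holomorphic functional calculus on the domain $N := \mathbb{C} \setminus (-\infty,0]$ of the principal branch.

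First, because only finitely many $a_{m,n}$ are nonzero, $r \mapsto P_r(a)$ is a polynomial, hence entire, map from $\mathbb{C}$ into $A_\theta^{\infty}$, so it is norm-continuous. For the positive map, let $\alpha_{(s,t)}$ denote the $*$-automorphism of $A_\theta$ given by $\alpha_{(s,t)}(U) = e^{is}U$ and $\alpha_{(s,t)}(V) = e^{it}V$, and for $r \in [0,1)$ set
\[
\phi_r(x) := \int_0^{2\pi}\!\!\int_0^{2\pi} K_r(s)\, K_r(t)\, \alpha_{(s,t)}(x)\, \frac{ds\, dt}{(2\pi)^2},
\]
where $K_r(s) = (1-r^2)/(1 - 2r\cos s + r^2) \geq 0$ is the Poisson kernel on $\mathbb{T}$, whose $k$-th Fourier coefficient equals $r^{|k|}$. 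Since $K_r \otimes K_r$ is a probability density and each $\alpha_{(s,t)}$ is a $*$-automorphism, $\phi_r$ is a unital positive (indeed completely positive) map, and a direct Fourier computation gives $\phi_r(U^m V^n) = r^{|m|+|n|} U^m V^n$, whence $\phi_r(a) = P_r(a)$. At $r = 1$ one simply has $P_1(a) = a$.

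Since $a > 0$ in the unital C*-algebra $A_\theta$, choose $\varepsilon > 0$ with $a \geq \varepsilon \cdot 1$; positivity and unitality of $\phi_r$ then yield $P_r(a) \geq \varepsilon \cdot 1$ for every $r \in [0,1]$, hence $\sigma(P_r(a)) \subset [\varepsilon, \|a\|] \subset N$. By Proposition \ref{Rudin}, each $r_0 \in [0,1]$ admits $\delta_{r_0} > 0$ such that $\|y - P_{r_0}(a)\| < \delta_{r_0}$ forces $\sigma(y) \subset N$; continuity of $r \mapsto P_r(a)$ upgrades this to an open disc $W_{r_0} \subset \mathbb{C}$ about $r_0$ on which $\sigma(P_r(a)) \subset N$ throughout, and $W := \bigcup_{r_0 \in [0,1]} W_{r_0}$ is the desired neighborhood. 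The main obstacle is the positivity step, namely verifying that the symbol $U^m V^n \mapsto r^{|m|+|n|} U^m V^n$ defines a positive map for $r \in [0,1]$; the Poisson-kernel integral representation above, a noncommutative counterpart of Weissler's device on the circle, dispatches it cleanly.
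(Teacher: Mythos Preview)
Your proof is correct and takes a genuinely different route from the paper's. The paper establishes strict positivity of $P_r(a)$ for $r\in[0,1]$ by a direct argument with states: assuming some $P_r(a)$ fails to be strictly positive, it picks a state $\phi$ achieving the bottom of the spectrum and derives a contradiction via a case analysis on the sign of $\sum(h_{m,n}+\overline{h_{m,n}})$; a separate sequential compactness argument then upgrades pointwise strict positivity to a uniform lower bound $\sigma(P_r(a))\subset[B_1,B_2]$. By contrast, you recognize $P_r(a)$ as the image of $a$ under the unital completely positive map $\phi_r$ obtained by averaging the gauge action $\alpha_{(s,t)}$ against the product Poisson kernel; unitality plus positivity of $\phi_r$ give the uniform bound $P_r(a)\geq\varepsilon\cdot 1$ in a single stroke, with no case analysis and no separate compactness step. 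The final passage from $[0,1]$ to an open complex neighbourhood $W$ via Proposition~\ref{Rudin} and norm-continuity of $r\mapsto P_r(a)$ is essentially the same in both proofs. Your argument is shorter and more structural---it explains \emph{why} radial damping of Fourier coefficients preserves positivity---while the paper's approach is more self-contained, relying only on elementary spectral facts rather than the gauge action and Poisson-kernel machinery.
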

\begin{proof}
Since $a$ is self-adjoint, using (\ref{sa}) for real $r$ we have 
$$a_{m,n} r^{(\vert m\vert +\vert n\vert)}=\overline{a_{-m,-n}} r^{(\vert -m\vert +\vert -n\vert)} \mathit{e}^{-2\pi imn \theta }.$$
So $P_{r}(a)$ is self-adjoint for real $r$, and consequently the spectrum of $P_{r}(a)$ is real for real $r$. Now we show for $0\leq r\leq 1$, $P_{r}(a)$ is a  strictly positive element. Suppose for some $0\leq r\leq 1$, $P_{r}(a)$ is not strictly positive. Let $[t_{0}, t_{1}]$ be the smallest closed interval containing the spectrum of $P_{r}(a)$. We know that there exists a state $\phi$ of $A_{\theta}$, such that $\phi (P_{r}(a))=t_{0}
\leq 0$. Now let
$$ M=\lbrace (m,n) \in \mathbb{Z}^{2}: \, (m,n)\neq 0,a_{m,n} \neq0 \rbrace,$$
$$M_{1}=\lbrace (m,n) \in M : \, m\geqslant0, n\geqslant0 \rbrace,$$
$$M_{2}=\lbrace (m,n) \in M : \, m>0, n<0 \rbrace.$$

Then since $a$ is self-adjoint, using (\ref{sa}) we have
\begin{equation} \label{a=}
a= 1+\underset{(m,n)\in M}{\sum}  
 a_{m,n} U^{m} V^{n}
\end{equation}
$$
=1+ \underset{(m,n)\in M_{1} \bigcup M_{2}}{\sum}  
 a_{m,n} U^{m} V^{n}+\underset{(-m,-n)\in M_{1} \bigcup M_{2}}{\sum}  
 a_{m,n} U^{m} V^{n}
$$
$$
=1+ \underset{(m,n)\in M_{1} \bigcup M_{2}}{\sum}  
 a_{m,n} U^{m} V^{n}+\underset{(m,n)\in M_{1} \bigcup M_{2}}{\sum}  
 a_{-m,-n} U^{-m} V^{-n}
$$
$$
=1+ \underset{(m,n)\in M_{1} \bigcup M_{2}}{\sum}  
 a_{m,n} U^{m} V^{n}+\underset{(m,n)\in M_{1} \bigcup M_{2}}{\sum}  
 \overline{a_{m,n}} \mathit{e}^{-2\pi imn \theta } U^{-m} V^{-n}
$$
$$
=1+ \underset{(m,n)\in M_{1} \bigcup M_{2}}{\sum}  
 a_{m,n} U^{m} V^{n}+\underset{(m,n)\in M_{1} \bigcup M_{2}}{\sum}  
 \overline{a_{m,n}} \mathit{e}^{-2\pi imn \theta } \mathit{e}^{2\pi imn \theta }V^{-n} U^{-m} 
$$
$$
=1+ \underset{(m,n)\in M_{1} \bigcup M_{2}}{\sum}  
 a_{m,n} U^{m} V^{n}+\underset{(m,n)\in M_{1} \bigcup M_{2}}{\sum}  
 \overline{a_{m,n}} V^{-n} U^{-m}.
$$

By the same reasoning we can show 
$$
P_{r}(a)= 1+\underset{(m,n)\in M}{\sum}  
 a_{m,n} r^{(\vert m \vert + \vert n \vert)} U^{m} V^{n}
$$

$$
=1+ \underset{(m,n)\in M_{1} \bigcup M_{2}}{\sum}  
 a_{m,n} r^{(\vert m \vert + \vert n \vert)} U^{m} V^{n}$$
 $$+\underset{(m,n)\in M_{1} \bigcup M_{2}}{\sum}  
 \overline{a_{m,n}} r^{(\vert m \vert + \vert n \vert)} V^{-n} U^{-m}.
$$
Since $a$ is strictly positive, using (\ref{a=}), we see that 

$$
\phi(a)= 1+ \underset{(m,n)\in M_{1} \bigcup M_{2}}{\sum}  
 a_{m,n} \phi (U^{m} V^{n})$$
 $$+\underset{(m,n)\in M_{1} \bigcup M_{2}}{\sum}  
 \overline{a_{m,n}} \phi (V^{-n} U^{-m})>0.
$$
Let $h_{m,n}=a_{m,n} \phi (U^{m} V^{n})$. Then regarding the fact that 
$$\phi (U^{m} V^{n})= \overline{\phi (V^{-n} U^{-m})},$$
we have 
\begin{equation} \label{aispos}
\phi(a)= 1+ \underset{(m,n)\in M_{1} \bigcup M_{2}}{\sum}  
 (h_{m,n}+\overline{h_{m,n}})>0.
\end{equation}
On the other hand, 
$$
\phi (P_{r}(a))=1+ \underset{(m,n)\in M_{1} \bigcup M_{2}}{\sum}  
 a_{m,n} r^{(\vert m \vert + \vert n \vert)} \phi (U^{m} V^{n})
$$
$$
+\underset{(m,n)\in M_{1} \bigcup M_{2}}{\sum}  
 \overline{a_{m,n}}r^{(\vert m \vert + \vert n \vert)} \phi (V^{-n} U^{-m})=t_{0}\leq0.
$$
So
\begin{equation} \label{pa=}
\phi (P_{r}(a))=1+ \underset{(m,n)\in M_{1} \bigcup M_{2}}{\sum}  
 r^{(\vert m \vert + \vert n \vert)}(h_{m,n}+\overline{h_{m,n}})\leq0.
\end{equation}
Then let 
$$r^{(\vert m_{0} \vert + \vert n_{0} \vert)}=\text{Min}\left \{r^{(\vert m \vert + \vert n \vert)} : \, (m,n)\in M \right \} $$
and note that $0\leq r^{(\vert m_{0} \vert + \vert n_{0} \vert)}\leq 1$. Now we have two cases. Either $$  -1<\underset{(m,n)\in M_{1} \bigcup M_{2}}{\sum}  
 (h_{m,n}+\overline{h_{m,n}})\leq0,$$ 
or
\begin{equation} \label{cas2}
\underset{(m,n)\in M_{1} \bigcup M_{2}}{\sum}  
 (h_{m,n}+\overline{h_{m,n}})>0.
\end{equation}
In the first case, since 

$$
\underset{(m,n)\in M_{1} \bigcup M_{2}}{\sum}(h_{m,n}+\overline{h_{m,n}}) \leq  r^{(\vert m_{0} \vert + \vert n_{0} \vert)} \underset{(m,n)\in M_{1} \bigcup M_{2}}{\sum}(h_{m,n}+\overline{h_{m,n}}),
$$ 
we have
$$
-1<\underset{(m,n)\in M_{1} \bigcup M_{2}}{\sum}  
 r^{(\vert m_{0} \vert + \vert n_{0} \vert)}(h_{m,n}+\overline{h_{m,n}})$$
 $$ \leq\underset{(m,n)\in M_{1} \bigcup M_{2}}{\sum} r^{(\vert m \vert + \vert n \vert)}  (h_{m,n}+\overline{h_{m,n}}).
$$
So $$\underset{(m,n)\in M_{1} \bigcup M_{2}}{\sum}  
 r^{(\vert m \vert + \vert n \vert)}(h_{m,n}+\overline{h_{m,n}})>-1,$$ which contradicts \ref{pa=}. In the second case again we have
 $$
 \underset{(m,n)\in M_{1} \bigcup M_{2}}{\sum}  
 r^{(\vert m_{0} \vert + \vert n_{0} \vert)}(h_{m,n}+\overline{h_{m,n}})$$
 $$\leq \underset{(m,n)\in M_{1} \bigcup M_{2}}{\sum} r^{(\vert m \vert + \vert n \vert)}  (h_{m,n}+\overline{h_{m,n}})\leq-1,
 $$
which means $$
\underset{(m,n)\in M_{1} \bigcup M_{2}}{\sum}  
 r^{(\vert m_{0} \vert + \vert n_{0} \vert)}(h_{m,n}+\overline{h_{m,n}})
$$
is strictly negative. But this contradicts \ref{cas2}, for 
$$
\underset{(m,n)\in M_{1} \bigcup M_{2}}{\sum}  
 r^{(\vert m_{0} \vert + \vert n_{0} \vert)}(h_{m,n}+\overline{h_{m,n}})
$$
and

$$
\underset{(m,n)\in M_{1} \bigcup M_{2}}{\sum}  
 (h_{m,n}+\overline{h_{m,n}})
$$
have the same signs.

Then we show there exist $B_{1}, B_{2}>0$ such that for $0\leq r\leq 1$, $\sigma (P_{r}(a)) \subseteq [B_{1}, B_{2}]$, where $\sigma (P_{r}(a))$ is the spectrum of $P_{r}(a)$. Since for $0\leq r\leq 1$,
$$
\Vert P_{r}(a) \Vert = \Vert 1+\underset{(m,n)\in M}{\sum}  
 a_{m,n} r^{(\vert m \vert + \vert n \vert)} U^{m} V^{n} \Vert 
$$
$$
\leq 1+\underset{(m,n)\in M}{\sum}  
  \vert a_{m,n} \vert r^{(\vert m \vert + \vert n \vert)}  \Vert U^{m} V^{n} \Vert
  \leq 1+\underset{(m,n)\in M}{\sum}  
  \vert a_{m,n} \vert,
$$
and we know the spectral rasius of $ P_{r}(a)$ is less than $
\Vert P_{r}(a)\Vert $, it suffices to put $$B_{2}=1+\underset{(m,n)\in M}{\sum}  
  \vert a_{m,n} \vert.$$

Now suppose there is no such $B_{1}$. So for each $n >0$ there exists $r_{n} \in [0,1]$, and $\lambda _{n} \in (0, \dfrac{1}{n})$, such that 
$\lambda _{n} \in \sigma (P_{r_{n}}(a))$. Obviously $\underset{n \to \infty}{\lim} \lambda _{n}=0 $.
 Since $\left \{ r_{n} \right \}_{n=1}^{\infty}$ 
is a bounded sequence, it has a convergent subsequence. 
For simplicity we wil call that sequence again 
$\left \{ r_{n} \right \}_{n=1}^{\infty}$. Let $\underset{n \to \infty}{\lim} r_{n}= r_{0}$. 
Then $ \underset{n \to \infty}{\lim}P_{r_{n}}(a)=P_{r_{0}}(a)$. 
Let $\text{Inv}(A_{\theta})$ be the set of invertible elements in $A_{\theta}$. It is an open set, hence its complement is closed. Then for $n>0$, since $\lambda _{n} \in \sigma (P_{r_{n}}(a))$, $$P_{r_{n}}(a)-\lambda _{n} 1\notin \text{Inv}(A_{\theta}).$$
Then $$
\underset{n \to \infty}{\lim}P_{r_{n}}(a)-1\lambda _{n}
=P_{r_{0}}(a)\notin \text{Inv}(A_{\theta}),
$$
which means $0 \in \sigma (P_{r_{0}}(a))$. But this is a contradiction, for we have shown for $0\leq r \leq 1$, $P_{r}(a)$ is strictly positive.

Now we pick a neighborhood of $[B_{1}, B_{2}]$ away from the $y$-axis. Let 
$$N=\left \{ x+iy : \, \frac{2B_{1}}{3}\leq x \leq B_{2}+1, -1\leq y\leq 1  \right \}.$$ 
Clearly for $0\leq r \leq 1$, $\sigma (P_{r}(a)) \subseteq N$. So by Proposition \ref{Rudin}, for $0\leq r \leq 1$, there exists $\delta _{r}$ such that for $y \in A_{\theta}$, 
$\Vert y-P_{r}(a) \Vert < \delta _{r}$ implies  $\sigma (y) \subseteq N$.
Since 
$$ P(a): \mathbb{C} \longrightarrow A_{\theta}, \quad r \mapsto P_{r}(a)$$
 is a continuous map, for $\delta _{r}$ there exists $\gamma_{r}>0$, such that for $r' \in \mathbb{C}$, $\vert r'-r \vert \leq \gamma_{r} $ implies $\Vert P_{r'}(a)-P_{r}(a) \Vert < \delta _{r} $. So if $r' \in B_{\gamma _{r}}(r)$, then  $\sigma (P_{r'}(a)) \subseteq N$ where $B_{\gamma _{r}}(r)$ is the 2-dimensional open ball centred at $r$ with radius $\gamma _{r}$. Now let $$W=\underset{0\leq r\leq 1}{\bigcup} B_{\gamma _{r}}(r).$$
Obviously $W$ is a complex open neighborhood of the interval $[0,1]$ and the way that we have constructed $W$ implies if $r \in W$, then $\sigma (P_{r}(a) \subseteq N$. Since $N$ is in the right half plane, using the standard branch of the logarithm, for $r \in W$, we can define $\log P_{r}(a)$. 
\end{proof}
   \section{ The Main Result}
   \label{prelim}
In this section we will first state our conjecture about a
logarithmic Sobolev inequality on the noncommutative 2-torus and then we will prove it for certain elements. 

\begin{conj} \label{conjec}
 Let $a= \underset{(m,n)\in \mathbb{Z}^{2}}{\sum}  
 a_{m,n} U^{m} V^{n} $ be  
in $A_{\theta}^{\infty}$ and assume  $a>0$.  Then  
\begin{equation} \label{lse}
\tau(a^{2} \log a)\leqslant \underset{(m,n)\in \mathbb{Z}^{2}}{\sum} (\vert m\vert + \vert n\vert) \vert a_{m,n} \vert ^{2} + \Vert a \Vert _{2} ^{2} \log \Vert a \Vert _{2},
\end{equation}
 which is the same as
$$\tau(a^{2} \log a)\leqslant \underset{(m,n)\in \mathbb{Z}^{2}}{\sum} (\vert m\vert + \vert n\vert) \vert a_{m,n} \vert ^{2} + \tau (a^{2})\log ( \tau (a))^{1/2}. $$
\end{conj}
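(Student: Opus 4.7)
The plan is to adapt Weissler's method from \cite{W} to the noncommutative torus by means of the Poisson-like deformation $P_r(a)$ of Proposition \ref{comp}, together with the power-series-positivity principle in Lemma \ref{ana}. I restrict to positive elements $a$ with finitely many nonzero Fourier coefficients and $a_{0,0} = 1$ (no loss of generality after rescaling, since both sides of (\ref{lse}) transform compatibly under $a \mapsto a/\tau(a)$), which is exactly the setting in which Proposition \ref{comp} ensures that $\log P_r(a)$ is well defined through the holomorphic functional calculus on an open complex neighbourhood $W$ of $[0,1]$.

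The central quantity to study is
\begin{equation*}
\Phi(r) = \underset{(m,n)\in \mathbb{Z}^{2}}{\sum} (|m|+|n|)\, r^{2(|m|+|n|)} |a_{m,n}|^{2} \;+\; \|P_r(a)\|_{2}^{2}\, \log \|P_r(a)\|_{2} \;-\; \tau\bigl(P_r(a)^{2}\, \log P_r(a)\bigr),
\end{equation*}
designed so that $\Phi(1)$ is precisely the gap (right-hand side minus left-hand side) of the inequality (\ref{lse}). With finite Fourier support, $P_r(a)$ and $\|P_r(a)\|_{2}^{2} = 1 + \sum_{(m,n)\neq (0,0)} r^{2(|m|+|n|)} |a_{m,n}|^{2}$ are polynomials in $r$, and Proposition \ref{comp} makes $r \mapsto \log P_r(a)$, and hence $\Phi$, analytic on $W$. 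A direct check using $P_0(a) = 1$ shows that $\Phi(0) = 0$. Consequently, if every coefficient of the power-series expansion of $\Phi$ around $r=0$ is non-negative, then Lemma \ref{ana} applied to $G = \Phi$ delivers $\Phi(1) \geq 0$, which is the desired inequality.

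The main obstacle is exactly this non-negativity of the Taylor coefficients. To attack it, I would expand
\begin{equation*}
\log P_r(a) = \log(1 + x_r) = \sum_{k \geq 1} \frac{(-1)^{k+1}}{k}\, x_r^{k}
\end{equation*}
in the holomorphic functional calculus, compute $\tau\bigl(P_r(a)^{2}\log P_r(a)\bigr)$ by iterating the product rule (\ref{pr}) together with the self-adjointness identity (\ref{sa}), and compare the resulting series with the simpler expansions of $\|P_r(a)\|_{2}^{2}\log \|P_r(a)\|_{2}$ and of the polynomial $\sum (|m|+|n|)\, r^{2(|m|+|n|)} |a_{m,n}|^{2}$. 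Each coefficient of $r^{N}$ in $\Phi$ becomes a finite sum of products of the $a_{m,n}$ weighted by commutation phases $e^{-2\pi i c\theta}$, and the core combinatorial task is to show that after all the alternating-sign cancellations produced by the logarithm, the net contribution to each coefficient is non-negative, with the leading positive contributions coming from the weight-$(|m|+|n|)$ term.

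This noncommutative rearrangement has no clean analogue in Weissler's scalar computation on the circle, and is the step that dictates the restriction to the specific class of elements handled here; it is precisely the point at which I expect the argument to stall when trying to extend it to arbitrary positive $a \in A_{\theta}^{\infty}$, matching the obstruction the authors flag for Section 4.
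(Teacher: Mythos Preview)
Your outline is essentially the same programme the paper follows in Section~4, and you correctly identify that it does not close: the statement is labelled a \emph{conjecture} precisely because the paper does not prove it either. So there is no ``paper's own proof'' to compare against; what the paper offers is a partial computation that stalls at the same combinatorial step you anticipate.

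Two points of comparison are worth noting. First, the paper does not work with your $\Phi(r)$ directly: it first uses the elementary bound $\|a\|_{2}^{2}\log\|a\|_{2}\geq\tfrac{1}{2}\|x\|_{2}^{2}$ (with $x=a-1$) to replace the awkward term $\|P_{r}(a)\|_{2}^{2}\log\|P_{r}(a)\|_{2}$ by the polynomial $\tfrac{1}{2}\|x_{r}\|_{2}^{2}$, and then studies the resulting simpler analytic function $G(r)$. This is what makes the Taylor coefficients tractable at all, and your $\Phi$ would need the same reduction before the expansion becomes manageable. Second, the paper pushes the combinatorics considerably further than your sketch: expanding $\tau(x_{r}^{k})$ via the multinomial formula and the self-adjointness relation~(\ref{sa}), it isolates the coefficient of each $r^{2(l+s)}$ as a sum over pairs $(P,Q)$ of multiplicity functions, weighted by a real quantity $B_{P,Q}=\sum_{\sigma}e^{\pi i\theta D_{\sigma}}$ coming from the commutation phases summed over permutations of a multiset. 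The obstruction is then pinpointed very concretely: the argument of Theorem~\ref{man} would go through verbatim if one could factor $B_{\tilde P,\tilde Q}=B_{\tilde P}\,B_{\tilde Q}$ into pieces depending only on $\tilde P$ and $\tilde Q$ separately, since that factorisation is exactly what reduces the coefficient to a positive-semidefinite quadratic form in the $C(i,l)$'s. In the one-parameter case $m=sn$ of Theorem~\ref{man} every $D_{\sigma}$ vanishes, $B_{P,Q}$ is just a multinomial count, and the factorisation is trivial; in the general two-parameter case it fails, and that is where both your proposal and the paper stop.
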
 
Our main goal was of course to prove the conjecture in general using Weissler's method \cite{W}, however, because of noncommutativity, in the last step  we encountered a technical problem. So we decided to restrict ourselves to a class of elements. Now we will prove the conjecture for the case $m=sn$ for some $s$ and later on in section 4 we will give more details of what we heve set up for the general case and explain what the problem is in this setting. 
\begin{theorem} \label{man} Let $a= \underset{n\in \mathbb{Z}}{\sum}  
 a_{n} U^{n} V^{sn} $ be  
in $A_{\theta}^{\infty}$ where $s \in \mathbb{Z} \setminus \left \{ 0 \right \}$, such that $a>0$. Then  
\begin{equation} \label{lse}
\tau(a^{2} \log a)\leqslant \underset{n\in \mathbb{Z}}{\sum} (1+ \vert s \vert ) \vert n\vert \vert a_{n} \vert ^{2} + \Vert a \Vert _{2} ^{2} \log \Vert a \Vert _{2}
\end{equation}
 which is the same as
$$\tau(a^{2} \log a)\leqslant \underset{n\in \mathbb{Z}}{\sum}(1+ \vert s \vert ) \vert n\vert \vert a_{n} \vert ^{2}+ \tau (a^{2})\log ( \tau (a))^{\dfrac{1}{2}}. $$
\end{theorem}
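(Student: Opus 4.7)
The plan is to follow Weissler's analytic-continuation scheme \cite{W}. By a density argument and by the scale invariance of the inequality (both sides transform compatibly under $a \mapsto \lambda a$ because $\tau(a^{2}) = \|a\|_{2}^{2}$), it suffices to treat the case when $a = \sum_{|n| \le N} a_{n} U^{n} V^{sn}$ has finite support and $a_{0} = \tau(a) = 1$. Under the constraint $m = sn$ the Poisson-type exponent $|m| + |n|$ in Proposition \ref{comp} collapses to $(1+|s|)|n|$, and that proposition supplies a complex neighbourhood $W$ of $[0,1]$ on which
$$P_{r}(a) = 1 + \sum_{n \ne 0} a_{n}\, r^{(1+|s|)|n|}\, U^{n} V^{sn}$$
is strictly positive for real $r$ and $\log P_{r}(a)$ is holomorphic in $r$.

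I would then introduce the scalar function
$$G(r) = \sum_{n \in \mathbb{Z}} (1+|s|)|n|\,|a_{n}|^{2}\, r^{2(1+|s|)|n|} + \tfrac{1}{2}\,\phi(r) \log \phi(r) - \tau\bigl(P_{r}(a)^{2} \log P_{r}(a)\bigr),$$
where $\phi(r) = \sum_{n} |a_{n}|^{2} r^{2(1+|s|)|n|}$ is a polynomial in $r$ with $\phi(0) = 1$. After shrinking $W$ so that $\phi$ stays near $1$, $G$ is holomorphic on $W$ with $G(0) = 0$ (since $P_{0}(a) = 1$ and $\phi(0) = 1$), and the desired inequality is precisely $G(1) \ge 0$. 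By Lemma \ref{ana} it therefore suffices to show that every Taylor coefficient of $G$ at the origin is non-negative.

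For the non-negativity I would exploit the distinctive feature of the class $m = sn$: setting $W_{s} = UV^{s}$, the identity $W_{s}^{n} = e^{-2\pi i s \binom{n}{2}\theta}\, U^{n} V^{sn}$ shows that $a$ lies in the commutative $C^{*}$-subalgebra generated by the single unitary $W_{s}$. Since the gauge automorphism $U \mapsto e^{i\alpha} U$, $V \mapsto e^{i\beta} V$ rotates $W_{s}$ by an arbitrary phase, $\sigma(W_{s}) = \mathbb{T}$ and $C^{*}(W_{s}) \cong C(\mathbb{T})$; since $\tau(W_{s}^{n}) = \delta_{n,0}$, the restriction of $\tau$ is normalised Lebesgue integration. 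Consequently $a$ corresponds to a positive function $\tilde{f} \in C(\mathbb{T})$ whose Fourier coefficients $\tilde{a}_{n}$ satisfy $|\tilde{a}_{n}| = |a_{n}|$, and $P_{r}(a)$ corresponds to $\sum \tilde{a}_{n}\, r^{(1+|s|)|n|}\, \omega^{n}$. Expanding $\log P_{r}(a) = \sum_{k \ge 1} (-1)^{k+1}(P_{r}(a) - 1)^{k}/k$ and applying $\tau(W_{s}^{n_{1}} \cdots W_{s}^{n_{k}}) = \delta_{n_{1} + \cdots + n_{k},\, 0}$, each Taylor coefficient of $G$ can, after pairing conjugate terms, be rewritten as a manifestly non-negative combination exactly as in Weissler's circle computation.

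The main obstacle --- and the very one the authors flag in Section 4 for the general conjecture --- is that without the constraint $m = sn$ the products $U^{m_{1}} V^{n_{1}} \cdots U^{m_{k}} V^{n_{k}}$ carry cocycle phases $e^{-2\pi i \theta \sum_{i < j} m_{j} n_{i}}$ that do not trivialise onto a single one-parameter family, so the Taylor coefficients of the corresponding $G(r)$ resist being grouped into non-negative sums. Under $m = sn$ this cocycle collapses onto the cyclic group generated by $W_{s}$, which is precisely why the non-negativity argument closes and Lemma \ref{ana} can be invoked to conclude $G(1) \ge 0$.
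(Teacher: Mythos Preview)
Your approach is correct and takes a genuinely different, more conceptual route than the paper's. The paper never passes to a commutative subalgebra; it expands $\tau(x_r^{k})$ directly in $A_\theta$ via the multinomial formula, computes the commutation phase $e^{2\pi i\theta B_\sigma}$ attached to each ordered product $\prod_i U^{\sigma(n_i)}V^{s\sigma(n_i)}$, and shows by an explicit calculation that $B_\sigma=\tfrac{s}{2}\sum_{n}(P(n)+Q(n))n^{2}$ is \emph{independent of the permutation} $\sigma$ --- precisely the ``cocycle collapse'' you identify, but established combinatorially rather than through your substitution $W_s=UV^{s}$. Once the phase factors out, the coefficient of $r^{2(1+|s|)l}$ is organised as a quadratic form $\sum_{i,j}A_l(i,j)C(i,l)\overline{C(j,l)}$ governed by Weissler's positive semidefinite matrix $A_l$. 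Your reduction to $C^{*}(W_s)\cong C(\mathbb T)$ with $\tau$ restricting to Haar measure is shorter and in fact yields the sharper right-hand side $\sum_n |n|\,|a_n|^{2}$ (so the factor $1+|s|$ becomes slack); the paper's hands-on computation, by contrast, is intended as a template for Section~\ref{prcon}, where no single unitary absorbs all of $a$ and the combinatorics must be confronted directly. One technical caveat: Weissler and the paper place $\tfrac12\|x_r\|_2^{2}$ in $G$, not $\tfrac12\phi(r)\log\phi(r)$; since $(1+y)\log(1+y)-y$ has alternating Taylor signs, your $G$ need not have all nonnegative coefficients even when the paper's does. Either switch to $\tfrac12\|x_r\|_2^{2}$ before invoking Lemma~\ref{ana}, or --- cleaner, given your reduction --- simply quote Weissler's circle theorem for $\tilde f$ and bypass the power-series argument altogether.
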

   
\begin{proof} 
First suppose $\tau (a)=1$ \textit{i.e}. $a_{0}=1$, and suppose that at most finitely many number of $a_{n}$ are nonzero. Put $x=a-1= \underset{\underset{n\neq 0}{n\in \mathbb{Z}}}{\sum} a_{n} U^{n} V^{sn}$. Using the fact that
$\Vert a \Vert _{2}^{2}=1+\Vert x \Vert _{2} ^{2}$, it can be shown that $$\Vert a \Vert _{2} ^{2} \log \Vert a \Vert _{2} \geqslant \dfrac{1}{2} \Vert x \Vert _{2}^{2}.$$
We shall prove the theorem by proving an stronger inequality, namely

\begin{equation} \label{als}
 0\leqslant\underset{n\in \mathbb{Z}}{\sum}(1+ \vert s \vert ) \vert n\vert \vert a_{n} \vert ^{2}+ \dfrac{1}{2} \Vert x \Vert _{2}^{2}- \tau(a^{2} \log a).
\end{equation}
For complex number $r$ we define
$$x_{r}=\underset{\underset{n\neq 0}{n \in \mathbb{Z}}}{\sum} a_{n} r^{ (1+ \vert s \vert )\vert n\vert} U^{n} V^{sn}$$
and $P_{r}(a)=1+x_{r}$. By Proposition \ref{comp}, for $r$ in some complex neighborhood of the interval $[0,1]$,
we can define $\log P_{r}(a)$. 
 
Let
$$G(r)=\underset{n\in \mathbb{Z}}{\sum}  (1+ \vert s \vert )r^{2(1+ \vert s \vert )\vert n\vert } \vert n\vert \vert a_{n} \vert^{2} 
$$
$$+  \dfrac{1}{2} \Vert x_{r} \Vert _{2}^{2}-  \tau((P_{r}(a))^{2} \log P_{r}(a)).$$
Therefore, to prove (\ref{als}) it suffices to show $G(1)\geqslant 0$. It can be shown that $G(r)$ is analytic in a complex neighborhood of $[0,1]$. So to prove $G(1)\geqslant 0$, using Lemma \ref{ana}, we shall show that all the coefficients of the expansion of $G(r)$ around $r=0$ are nonnegative.
 
First note that for $r$ with small enough $\vert r \vert$ we have $\Vert x_{r} \Vert_{2} < 1$ (the sum is a finite sum). So 
$$(P_{r}(a))^{2} \log P_{r}(a)=
(1+x_{r})^{2} \log (1+x_{r}) $$
$$=(1+2x_{r}+ x_{r}^{2})
(1-\dfrac{1}{2}x_{r}^{2}+\dfrac{1}{3}x_{r}^{3}-\dfrac{1}{4}x_{r}^{4}+ \cdots ) $$
$$= x_{r}+ \dfrac{3}{2} x_{r}^{2} 
+ 2\sum_{k=3}^{\infty} (-1)^{k-1} x_{r}^{k} \dfrac{(k-3)!}{k!}.$$
So 
\begin{equation} \label{gr}
G(r)=\underset{n\in \mathbb{Z}}{\sum} (1+ \vert s \vert ) r^{2 (1+ \vert s \vert )\vert n\vert} \vert n\vert\vert a_{n} \vert^{2} 
+  \dfrac{1}{2} \Vert x_{r} \Vert _{2}^{2}
\end{equation}

$$-  \tau(x_{r})- \dfrac{3}{2}\tau(x_{r}^{2})+ 2\sum_{k=3} ^{\infty} (-1)^{k}\dfrac{(k-3)!}{k!} \tau (x_{r}^{k}). $$
Using the facts that $\tau(x_{r})=0$ and $$\tau (x_{r}^{2})= \Vert x_{r}\Vert^{2}_{2}= \underset{\underset{n\neq 0}
{n\in \mathbb{Z}}}{\sum}  r^{2(1+ \vert s \vert )\vert n\vert} \vert a_{n}\vert ^{2},$$
combined with (\ref{gr}), we get
\begin{equation} \label{gro}
G(r)= 2\underset{\underset{n\geqslant0}
{n\in \mathbb{Z}}}{\sum}  ( (1+ \vert s \vert ) n -1)r^{2(1+ \vert s \vert ) n} \vert a_{n}\vert ^{2} +2\sum_{k=3}^{\infty} g_{k}(r),
\end{equation}
where $g_{k}(r)=(-1)^{k}\dfrac{(k-3)!}{k!} \tau (x_{r}^{k})$. Now we try to find the Taylor expansion of $\tau(x_{r}^{k})$.

First we need to fix some notations. Let$$ M=\lbrace n \in \mathbb{Z}: \, n\neq 0,a_{n} \neq0 \rbrace,$$
$$M_{1}=\lbrace n \in M : \, n>0 \rbrace.$$  For a function $P:M \longrightarrow \mathbb{Z}_{0}^{+}$, we put 
$$
M_{P}= \left \{ n \in M : \, P(n) \neq  0 \right \}.
$$
So $(M_{P},P)$ is a multiset. Indeed, the multiplicity of $n$ is $P(n)$.
Moreover, let $\mathcal{S} (M_{P})$ be the set of all permutations of the multiset $(M_{P},P)$.
Let $I_{k}$ be the set of all functions $P:M \longrightarrow \mathbb{Z}_{0}^{+}$
such that $$\underset{n \in M}{\sum} P(n)=k,$$
and $I_{k,0}$ be the set of all functions in $I_{k}$ such that 
$$
\underset{n \in M}{\sum} P(n)n=0.
$$
 For $P:M \longrightarrow \mathbb{Z}_{0}^{+}$, we also define  $$Q_{P}:M_{1} \longrightarrow \mathbb{Z}_{0}^{+}$$ by
$Q_{P}(n)=P(-n)$.
 
 Using the  multinomial expansion of $x_{r}$ we have
$$x_{r}^{k}=\sum_{P \in I_{k}} \left( \underset{n\in M}{\prod} \left( a_{n}r^{(1+ \vert s \vert ) \vert n \vert} \right)^{P(n)}\right)
\left(\underset{\sigma \in \mathcal{S}(M_{P})} {\sum}
  \prod_{i=1}^{k} U^{\sigma(n_{i}^{P})} V^{s\sigma(n_{i}^{P})}
  \right)$$
where $n_{i}^{P}$, for $i=1,2, \cdots k$, is a labeling of elements of $M_{P}$ when $P \in I_{k}$. 
Then $$ \tau (x_{r}^{k})=\sum_{P \in I_{k,0}} \left( \underset{n\in M}{\prod} \left( a_{n}r^{(1+ \vert s \vert )\vert n \vert} \right)^{P(n)}\right)
\tau \left(\underset{\sigma \in \mathcal{S}(M_{P})} {\sum}
  \prod_{i=1}^{k} U^{\sigma(n_{i}^{P})} V^{s\sigma(n_{i}^{P})}
  \right).$$
 
 So we have
 $$ \tau (x_{r}^{k})=\sum_{P \in I_{k,0}} \underset{n\in M}{\prod} \left( a_{n}r^{(1+ \vert s \vert ) n } \right)^{P(n)}
 \underset{-n\in M_{1}}{\prod} \left( a_{n}r^{ -(1+ \vert s \vert )  n} \right)^{P(n)}\times 
$$
$$
\tau \left(\underset{\sigma \in \mathcal{S}(M_{P})} {\sum}
  \prod_{i=1}^{k} U^{\sigma(n_{i}^{P})} V^{s\sigma(n_{i}^{P})}
  \right).
$$
And hence
$$ \tau (x_{r}^{k})=\sum_{P \in I_{k,0}}  \underset{n\in M_{1}}{\prod} \left( a_{n}r^{ (1+ \vert s \vert )  n} \right)^{P(n)}
 \underset{n\in M_{1}}{\prod} \left( a_{-n}r^{ (1+ \vert s \vert )n} \right)^{Q_{P}(n)}
$$

$$\tau \left(\underset{\sigma \in \mathcal{S}(M_{P})} {\sum}
  \prod_{i=1}^{k} U^{\sigma(n_{i}^{P})} V^{s\sigma(n_{i}^{P})}
  \right).
$$

Then since $a$ is self-adjoint, using (\ref{sa}) we have
\begin{equation} \label{mmm}
\tau (x_{r}^{k})=\sum_{(P,Q) \in H_{k}}  \underset{n\in M_{1}}{\prod} \left( a_{n}r^{ (1+ \vert s \vert )n} \right)^{P(n)}
 \underset{n\in M_{1}}{\prod} \left( \overline{a_{n}}r^{(1+ \vert s \vert )n} \right)^{Q(n)}
\end{equation}

$$ e^{(-2 \pi is\theta  \underset{n\in M_{1}}{\sum} Q(n)n^{2})} 
\tau \left(\underset{\sigma \in \mathcal{S}(M_{P,Q})} {\sum}
  \prod_{i=1}^{k} U^{\sigma(n_{i}^{P,Q})} V^{s\sigma(n_{i}^{P,Q})}
  \right),
$$
where $H_{k}$ is the set of all pairs $(P,Q)$ such that $$P:M_{1} \longrightarrow \mathbb{Z}_{0}^{+},$$  $$Q:M_{1} \longrightarrow \mathbb{Z}_{0}^{+},$$ 
\begin{equation}\label{8}
\underset{n \in M_{1}}{\sum} P(n)n =\underset{n \in M_{1} }{\sum} Q(n)n,
\end{equation}

\begin{equation}\label{10}
\underset{n \in M_{1}}{\sum} P(n) + \underset{n \in M_{1}}{\sum} Q(n)=k.
\end{equation}
Also
$(M_{P,Q}, [P,Q])$ is a multiset defined by $$
M_{P,Q}=M_{P}^{+} \cup M_{Q}^{-},
$$
where
$$
M_{P}^{+}=\left \{ n \in M_{1}  : \, P(n) \neq  0 \right \},
$$

$$
M_{Q}^{-}=\left \{ n \in M : \, -n \in M_{1},  Q(-n) \neq  0 \right \},
$$ 
and
$$
[P,Q]:M_{P,Q} \longrightarrow \mathbb{Z}_{0}^{+},
$$ 
is defined by
$$[P,Q](n)=\left\{\begin{matrix}
P(n) &  n \in M_{P}^{+} \\
Q(-n) & n \in M_{Q}^{-}
\end{matrix}.\right.
$$
Also, $n_{i}^{P,Q}$ for $i=1,2, \cdots,k$ is a labeling for elements of $M_{P,Q}$.

So regarding (\ref{mmm}), we see
$$
\tau (x_{r}^{k})=\sum_{(P,Q) \in H_{k}}  \underset{n\in M_{1}}{\prod} \left( a_{n}r^{(1+ \vert s \vert )  n} \right)^{P(n)}
 \underset{n\in M_{1}}{\prod} \left( \overline{a_{n}}r^{ (1+ \vert s \vert )n} \right)^{Q(n)}
$$
$$ e^{(-2 \pi is\theta  \underset{n\in M_{1}}{\sum} Q(n)n^{2})}
\underset{\sigma \in \mathcal{S}(M_{P,Q})} {\sum} \tau \left( 
  \prod_{i=1}^{k}U^{\sigma(n_{i}^{P,Q})} V^{s\sigma(n_{i}^{P,Q})} \right).
$$
 Now we calculate $\tau \left( 
  \prod_{i=1}^{k}U^{\sigma(n_{i}^{P,Q})} V^{s \sigma(n_{i}^{P,Q})} \right)$, for $ \sigma \in \mathcal{S}(M_{P,Q})$, the set of permutations of the multiset $M_{P,Q}$.  For simplicity we drop the superscript $P,Q$. Using (\ref{8}), we have 
  \begin{equation} \label{sig0}
   \underset{i=1}  {\overset{k}\sum} \sigma(n_{i})=0
  \end{equation}
Hence

$$\tau \left( 
  \prod_{i=1}^{k}U^{\sigma(n_{i})} V^{s\sigma(n_{i})} \right)= 
  \tau \left( e^{2 \pi i \theta B_{\sigma} } U^{ \underset{i=1}  {\overset{k}\sum} \sigma(n_{i})} V^{ s\underset{i=1}  {\overset{k}\sum} \sigma(n_{i})} \right)=e^{2 \pi i \theta B_{\sigma} }, $$
where
 for $ \sigma \in \mathcal{S}(M_{P,Q})$, 
 $$B_{\sigma}=\dfrac{s}{2} \underset{n \in M_{1}}{\sum} (P(n) + Q(n))n^{2}.$$

Infact, we know that
  $$
  B_{\sigma}= -s\sigma(n_{2})\sigma(n_{1}) $$
  $$ 
  -s\sigma(n_{3})\left[\sigma(n_{1})+\sigma(n_{2})\right]$$
  $$
  -s\sigma(n_{4})\left[\sigma(n_{1})+\sigma(n_{2})+\sigma(n_{3})\right]-
  \cdots $$
  $$
  - s\sigma(n_{k-1})\left[\sigma(n_{1})+\sigma(n_{2})+ \cdots + \sigma(n_{k-2})\right].
  $$
  $$
 - s\sigma(n_{k})\left[\sigma(n_{1})+\sigma(n_{2})+ \cdots + \sigma(n_{k-1})\right].
  $$
We also define 
$$
A_{\sigma}=s\sigma(n_{1})\left[\sigma(n_{1})+\sigma(n_{2})+ \cdots + \sigma(n_{k})\right]$$
$$+s\sigma(n_{2})\left[\sigma(n_{2})+\sigma(n_{3})+ \cdots + \sigma(n_{k})\right]
$$
$$+s\sigma(n_{3})\left[\sigma(n_{3})+\sigma(n_{4})+ \cdots + \sigma(n_{k})\right] $$
$$+s\sigma(n_{4})\left[\sigma(n_{4})+\sigma(n_{5})+ \cdots + \sigma(n_{k})\right]+ \cdots$$
$$+s\sigma(n_{k-1})\left[\sigma(n_{k-1})+ \sigma(n_{k})\right]$$
$$+s\sigma(n_{k})\sigma(n_{k}).$$
Using (\ref{sig0}), we get $B_{\sigma}-A_{\sigma}=0$. So $B_{\sigma}=\dfrac{1}{2}(B_{\sigma}+A_{\sigma})$. On the other hand,
$$
B_{\sigma}+A_{\sigma}= \underset{i=1}  {\overset{k}\sum} s(\sigma(n_{i}))^{2}=\underset{i=1}  {\overset{k}\sum} s(n_{i})^{2}$$
$$=\sum_{n \in M_{P}^{+}} P(n)sn^{2}+\sum_{n \in M_{Q}^{-}}Q(-n)sn^{2}=\sum_{n \in M_{1}} P(n)sn^{2}+\sum_{-n \in M_{1}} Q(-n)sn^{2}
$$
$$
=\sum_{n \in M_{1}} P(n)sn^{2}+\sum_{n \in M_{1}} Q(n)sn^{2}.
$$
So we have proved 
 $$B_{\sigma}=\dfrac{s}{2} \underset{n \in M_{1}}{\sum} (P(n) + Q(n))n^{2}.$$
Therefore,

$$
\tau (x_{r}^{k})=\sum_{(P,Q) \in H_{k}}  \underset{n\in M_{1}}{\prod} \left( a_{n}r^{ (1+ \vert s \vert ) n} \right)^{P(n)}
 \underset{n \in M_{1}}{\prod} \left( \overline{a_{n}}r^{ (1+ \vert s \vert )n} \right)^{Q(n)}
$$
$$ e^{(-2 \pi is\theta  \underset{n\in M_{1}}{\sum} Q(n)n^{2})}
\underset{\sigma \in \mathcal{S}(M_{P,Q})} {\sum} e^{2 \pi i \theta B_{\sigma} }.
$$

 Since $$\vert \mathcal{S}(M_{P,Q}) \vert =\dfrac{k!}{\underset{n\in M_{1}}{\prod} P(n)!Q(n)!},$$
we see that
$$
\tau (x_{r}^{k})=k!\sum_{(P,Q) \in H}  \underset{n\in M_{1}}{\prod} \left( a_{n}r^{ (1+ \vert s \vert )  n} \right)^{P(n)}
 \underset{n\in M_{1}}{\prod} \left( \overline{a_{n}}r^{ (1+ \vert s \vert )n} \right)^{Q(n)} \times
$$
$$ \dfrac{1}{\underset{n\in M_{1}}{\prod} P(n)!Q(n)!}  e^{-2 \pi is\theta  \underset{(m,n)\in M_{1}}{\sum} Q(n)n^{2}}   
 e^{\pi is \theta \left(\underset{n \in M_{1}}{\sum} (P(n) + Q(n))n^{2}\right)}
$$
$$
=k!\sum_{(P,Q)\in H}  \underset{n\in M_{1}}{\prod} \left( a_{n}r^{ (1+ \vert s \vert )  n} \right)^{P(n)}
 \underset{n\in M_{1}}{\prod} \left( \overline{a_{n}}r^{ (1+ \vert s \vert )n} \right)^{Q(n)}\times
$$
$$
 \dfrac{1}{\underset{n\in M_{1}}{\prod} P(n)!Q(n)!}e^{\pi is \theta  \underset{n \in M_{1}}{\sum} (P(n) - Q(n))n^{2}}$$
$$
=k!\sum_{(P,Q) \in H} r^{\left(\underset{n \in M_{1}}{\sum} (1+ \vert s \vert )n ( P(n)+Q(n)) \right)}\times  $$
$$e^{\pi i  s\theta  \underset{n \in M_{1}}{\sum} P(n)n^{2}} \underset{n\in M_{1}}  {\prod} \dfrac{\left( a_{n} \right) ^{P(n)}}{P(n)!}
 e^{-\pi is \theta  \underset{n \in M_{1}}{\sum} Q(n)n^{2}} \underset{n\in M_{1}}{\prod} \dfrac{\left( \overline{a_{n}} \right)^{Q(n)}}{Q(n)!}.
$$

Now for a function $P:M \longrightarrow \mathbb{Z}_{0}^{+},$ define 
\begin{equation} \label{C(P)}
D(p)= e^{\pi i s\theta\underset{n \in M_{1}}{\sum} P(n)n^{2}} \underset{n\in M_{1}}{\prod} \dfrac{\left( -a_{n} \right)^{P(n)}}{P(n)!}.
\end{equation} 
Then we have
$$
\tau (x_{r}^{k})=k!\sum_{(P,Q) \in H} (-1)^{  (\underset{n \in M_{1}}{\sum} P(n) +  Q(n))} r^{(\underset{n \in M_{1}}{\sum} (1+ \vert s \vert )n\left( P(n)+Q(n) \right)) }D(P)\overline{D(Q)} 
$$

So
\begin{equation} \label{aaa}
\tau (x_{r}^{k})=(-1)^{k}k!\sum_{l=2}^{\infty} r^{2(1+ \vert s \vert )l}
\left( 
 \sum_{(P,Q) \in G_{l}} D(P)\overline{D(Q)} \right)
\end{equation}

 where $G_{l}$ the set of all pairs $(P,Q)$ such that $$P:M_{1} \longrightarrow \mathbb{Z}_{0}^{+},$$  $$Q:M_{1} \longrightarrow \mathbb{Z}_{0}^{+},$$ and
$$
\underset{n \in M_{1}}{\sum} P(n) + \underset{n \in M_{1}}{\sum} Q(n)=k,
$$
$$
\underset{n \in M_{1}}{\sum} P(n)n = \underset{n \in M_{1} }{\sum} Q(n)n =l,
$$

One should note that in (\ref{aaa}), $l$ starts from $2$. Here we shall show why that is the case:
\begin{equation}
\underset{n \in M_{1}}{\sum} P(n)\leqslant \underset{n \in M_{1}}{\sum} P(n)n.
\end{equation}
Similarly we have 
\begin{equation} \label{ccc}
\underset{n \in M_{1}}{\sum} Q(n)\leqslant \underset{n \in M_{1}}{\sum} Q(n)n,
\end{equation}

So $$k=\underset{n \in M_{1}}{\sum} P(n) + \underset{n \in M_{1}}{\sum} Q(n)$$
$$\leqslant\underset{n \in M_{1}}{\sum} P(n)n + \underset{n \in M_{1}}{\sum} Q(n)n =2l$$
So for a fixed $k$, $\dfrac{k}{2} \leqslant l$ and since $k$ is at least $3$, $l \geqslant 2$. 

Now for $l$ and $t\geqslant 1$ define 
$$C(t,l):=\underset {P \in H_{t,l}}{\sum} D(P),$$
where $H_{t,l}$ is the set of all functions $P:M_{1} \longrightarrow \mathbb{Z}_{0}^{+}$
such that
\begin{equation} \label{t}
\underset{n \in M_{1}}{\sum} P(n)=t,
\end{equation}
and
\begin{equation} \label{l}
\underset{n \in M_{1}}{\sum} P(n)n=l.
\end{equation}
When there is no such a $P$ then the sum is taken to be $0$. For instance 
if $t>l$ then there is no such a $P$, for 
$$t=\underset{n \in M_{1}}{\sum} P(n) \leqslant
 \underset{n \in M_{1}}{\sum} P(n)n=l.$$
Then we have 
$$ \underset {(P,Q) \in G_{l}}{\sum} D(P)\overline{D(Q)} 
 =\sum_{t=1}^{k-1}\underset {\underset{\mathcal{P} \in H_{t,l}}{\mathcal{Q} \in H_{k-t,l}} }{\sum} D(\mathcal{P})\overline{D(\mathcal{Q})}
$$
$$= \sum_{t=1}^{k-1} \left( \underset {\mathcal{P} \in H_{t,l}}{\sum} D(\mathcal{P})\right)
\left(  \underset {\mathcal{Q} \in H_{k-t,l}}{\sum} \overline{D(\mathcal{Q})}\right)=\sum_{t=1}^{k-1} C(t,l)\overline{C(k-t,l)}.$$
Now using this in (\ref{aaa}), we get
$$\tau (x_{r}^{k})=(-1)^{k}k!\sum_{l=2}^{\infty} r^{2(1+ \vert s \vert )l}\sum_{t=1}^{k-1} C(t,l)\overline{C(k-t,l)}$$ and this implies
$$\sum_{k=3}^{N} g_{k}(r)=
\sum_{k=3}^{N}(k-3)!\sum_{l=2}^{\infty} r^{2(1+ \vert s \vert )l}\sum_{t=1}^{k-1} C(t,l)\overline{C(k-t,l)}$$
$$
=\sum_{l=2}^{\infty} r^{2(1+ \vert s \vert )l}\sum_{k=3}^{N}(k-3)!
\sum_{t=1}^{k-1} C(t,l)\overline{C(k-t,l)}
$$
$$
=\sum_{l=2}^{\infty} r^{2(1+ \vert s \vert )l}\underset{3\leq i+j\leq N}{\sum_{i=1}^{l}\sum_{j=1}^{l}}
(i+j-3)! C(i,l)\overline{C(j,l)}.
$$
Therefore, for $N\geqslant 2l$ the coefficient of $r^{2(1+ \vert s \vert )l}$ 
in $\sum_{k=3}^{N} g_{k}(r)$ is
\begin{equation} \label{Cogk}
\underset{i+j\geq 3}{\sum_{i=1}^{l}\sum_{j=1}^{l}}(i+j-3)! C(i,l)\overline{C(j,l)},
\end{equation}
and this is also the coefficient of of $r^{2(1+ \vert s \vert )l}$ in $\sum_{k=3}^{\infty} g_{k}(r)$.

Now we show for $l\geq0$,  $C(1,l)=-a_{l} e^{\pi isl^{2}\theta}$. Infact this is true, for if $l\notin M$, then  $a_{l}=0$ and also $H_{1,l}=\varnothing $ which implies $C(1,l)=0$. If $l\in M$, then
$$P(n)=
\left\{\begin{matrix}
1 & n=l\\ 
0 & \text{otherwise}
\end{matrix}\right.
$$
 is the only element in $H_{1,l}$. So using (\ref{C(P)}), we have
\begin{equation} \label{c1ls}
 C(1,l)=\underset {P \in H_{1,l}}{\sum} D(P)=-a_{l} e^{\pi isl^{2}\theta}.
\end{equation} 

Recall that $$G(r)= 2\underset{\underset{n\geqslant0}
{n\in \mathbb{Z}}}{\sum}  ( (1+ \vert s \vert )n -1)r^{2(1+ \vert s \vert )n} \vert a_{n}\vert ^{2} +2\sum_{k=3}^{\infty} g_{k}(r).$$ 
Therefore the coefficient of $r^{2(1+ \vert s \vert )l}$, ($l\geqslant2$) in $G(r)$ is
$$
2((1+ \vert s \vert )l-1)\vert a_{l}\vert ^{2}+2 \underset{i+j\geq 3}{\sum_{i=1}^{l}\sum_{j=1}^{l}}(i+j-3)! C(i,l)\overline{C(j,l)}.
$$
Using (\ref{c1ls}), this is equal to
$$
2((1+ \vert s \vert )l-1)C(1,l)\overline{C(1,l)}+2 \underset{i+j\geq 3}{\sum_{i=1}^{l}\sum_{j=1}^{l}}(i+j-3)! C(i,l)\overline{C(j,l)}
$$
which can be written as 
\begin{equation} \label{fco}
2\sum_{i=1}^{l}\sum_{j=1}^{l}A_{l}(i,j)C(i,l)\overline{C(j,l)}
\end{equation}
where the matrix $A_{l}$ defined by $$
A_{l}(i,j)=\left\{\begin{matrix}
(1+ \vert s \vert )l-1 & i=j=1 \\ 
(i+j-3)! & i+j\geqslant 3 
\end{matrix}\right..
$$
In \cite{W} it has been shown that for $l\geqslant2$, $A_{l}$   is a positive semi-definite matrix. Hence the coefficient of $r^{2(1+ \vert s \vert )l}$, ($l\geqslant2$) in $G(r)$ is positive. So we have proved ~(\ref{lse}) for a positive  element $a$ with $a_{0}=1$ in which only finitely many coefficients are non-zero.

Homogeneity of (\ref{lse}) implies it should hold for a positive element $a$ (with only finitely many  non-zero coefficients), even if $a_{0} \neq 1$. 

Finally, we shall prove (\ref{lse}) for an arbitrary strictly positive element of the form $ \underset{n\in \mathbb{Z}}{\sum}  
 a_{n} U^{n} V^{sn} $. For  $a= \underset{n\in \mathbb{Z}}{\sum}  
 a_{n} U^{n} V^{sn} $ and $b= \underset{n\in \mathbb{Z}}{\sum}  
 b_{n} U^{n} V^{sn} $ in $A_{\theta}^{\infty}$, we define
 $$  a\ast b= \underset{p\in \mathbb{Z}}{\sum}  
 (a\ast b)_{p} U^{p} V^{sp}$$
 where $(a\ast b)_{p}=a_{p}b_{p}$. We also define $d_{j}$ in $A_{\theta}^{\infty}$ for $j\geqslant0$ by
 $$
 d_{j}=\underset{n\in \mathbb{Z}}{\sum}  
 d_{n}^{j} U^{n} V^{sn},
 $$
where $$
d_{n}^{j}=\left\{\begin{matrix}
1 & \vert n\vert\leq  j\\ 
 0& \text{otherwise}
\end{matrix}\right..
$$
Then we have $$
\Vert (d_{j}\ast a)-a \Vert_{2}^{2}=\underset{n \in \mathbb{Z}}{\sum}\vert d_{n}^{j}a_{n}-a_{n}\vert^{2}=\underset{ \vert n\vert>   j}{\sum}\vert a_{n} \vert^{2}.
$$
So
\begin{equation} \label{lima}
\lim_{j\rightarrow \infty }d_{j}\ast a=a,
\end{equation} 
in $ \Vert .\Vert_{2}$ topology.

Moreover,
$$
\Vert (d_{j}\ast a)-a \Vert=\Vert \underset{n\in \mathbb{Z}}{\sum}(d_{n}^{j}a_{n}-a_{n}) U^{n} V^{sn}\Vert 
$$ 
$$
\leq \underset{ \vert n\vert>   j}{\sum}\vert a_{n} \vert,
$$
which implies
$$
\lim_{j\rightarrow \infty }d_{j}\ast a=a,
$$
in C*-norm topology. Now let $$a= \underset{n\in \mathbb{Z}}{\sum}  
 a_{n} U^{n} V^{sn} $$ be a strictly  positive element in $A_{\theta}^{\infty}$ and $F$ be a  complex open neighborhood of $\sigma (a)$ in the right half plane away from the $y$ axis. (Since $a$ is strictly positive, we can choose such a set.) Therefore, using Proposition (\ref{Rudin}), we see for large enough $j$, $\sigma (d_{j})$ is inside $F$. Hence for large enough $j$, we can define $\log (d_{j}\ast a)$. 

  Since for $j
 \geqslant0$, $d_{j}\ast a$ is an element of  $A_{\theta}^{\infty}$ which has at most finitely many non-zero coefficients, we shall apply (\ref{lse}) to $d_{j}\ast a$ for large enough $j$ and we will get
\begin{equation} \label{conv}
\tau((d_{j}\ast a)^{2} \log (d_{j}\ast a))\leqslant \underset{n\in \mathbb{Z}}{\sum} (1+ \vert s \vert ) \vert n\vert \vert d_{n}^{j} \vert ^{2}\vert a_{n} \vert ^{2} 
\end{equation}
$$
+ \Vert d_{j}\ast a \Vert _{2} ^{2} \log \Vert d_{j}\ast a \Vert _{2}.$$
Let $$h=\underset{n\in \mathbb{Z}}{\sum} (1+ \vert s \vert )^{\dfrac{1}{2}}\vert n\vert^{\dfrac{1}{2}}  a_{n} U^{n} V^{sn}.$$ Then
$$
\Vert d_{j}\ast h \Vert _{2}^{2}=
\underset{n\in \mathbb{Z}}{\sum} (1+ \vert s \vert ) \vert n\vert \vert d_{n}^{j} \vert ^{2}\vert a_{n} \vert ^{2}
$$
and
\begin{equation} \label{limh}
\lim_{j\rightarrow \infty }d_{j}\ast h=h 
\end{equation}  
in $ \Vert .\Vert_{2}$ topology.

Thus,
\begin{equation} \label{limn}
\lim_{j\rightarrow \infty }\underset{n\in \mathbb{Z}}{\sum} (1+ \vert s \vert ) \vert n\vert \vert d_{n}^{j} \vert ^{2}\vert a_{n} \vert ^{2}=\lim_{j\rightarrow \infty } \Vert d_{j}\ast h \Vert _{2}^{2}
\end{equation}
$$
=\lim_{j\rightarrow \infty } \Vert  h \Vert _{2}^{2}=\underset{n\in \mathbb{Z}}{\sum} (1+ \vert s \vert ) \vert n\vert \vert a_{n} \vert ^{2}.
$$
To prove (\ref{lse}), taking the limit of (\ref{conv}) as $j\longrightarrow \infty$, we use (\ref{lima}), (\ref{limn}) and also the continuity of $\tau$ with respect to $\Vert .\Vert_{2}$. Infact, $\tau$ is continuous with respect to $\Vert .\Vert_{2}$, for one can show (See \cite{M} Theorem 3.3.2.) for $a \in A_{\theta}$,
$$
\vert \tau (a) \vert ^{2}\leq \Vert \tau \Vert \tau (a^{*}a).
$$

\end{proof}
\section{ Towards proving the Conjecture}
   \label{prcon}
   In this section, as promised in section 3, we will give the details of what we have done  towards proving the
    Conjecture \ref{conjec} and we will explain what the remaining technical problem is.
    In what follows we will use the assumptions of the Conjecture \ref{conjec}.
     
First suppose $\tau (a)=1,$ \textit{i.e}. $a_{0,0}=1$ and suppose that at most finitely many number of $a_{m,n}$ are nonzero. Put $x=a-1= \underset{\underset{(m,n)\neq (0,0)}{(m,n)\in \mathbb{Z}^{2}}}{\sum} a_{m,n} U^{m} V^{n}$. Using the fact that
$\Vert a \Vert _{2}^{2}=1+\Vert x \Vert _{2} ^{2}$, it can be shown that $$\Vert a \Vert _{2} ^{2} \log \Vert a \Vert _{2} \geqslant \dfrac{1}{2} \Vert x \Vert _{2}^{2}.$$
We are going to prove the conjecture by proving an stronger inequality:

\begin{equation} \label{als}
 0\leqslant\underset{(m,n)\in \mathbb{Z}^{2}}{\sum} (\vert m\vert + \vert n\vert) \vert a_{m,n} \vert ^{2}+ \dfrac{1}{2} \Vert x \Vert _{2}^{2}- \tau(a^{2} \log a).
\end{equation}
For complex number $r$ we define
$$x_{r}=\underset{\underset{(m,n)\neq (0,0)}{(m,n)\in \mathbb{Z}^{2}}}{\sum} a_{m,n} r^{(\vert m\vert +\vert n\vert)} U^{m} V^{n}$$
and $P_{r}(a)=1+x_{r}$. By Proposition \ref{comp}, for $r$ in some complex neighborhood of the interval $[0,1]$,
we can define $\log P_{r}(a)$. 

 Let
$$G(r)=\underset{(m,n)\in \mathbb{Z}^{2}}{\sum}  r^{2(\vert m\vert +\vert n\vert)} (\vert m\vert + \vert n\vert)\vert a_{m,n} \vert^{2} 
$$
$$+  \dfrac{1}{2} \Vert x_{r} \Vert _{2}^{2}-  \tau((P_{r}(a))^{2} \log P_{r}(a)).$$
Therefore, to prove (\ref{als}) it suffices to show that $G(1)\geqslant 0$. It can be shown that $G(r)$ is analytic in a complex neighborhood of $[0,1]$. So to prove $G(1)\geqslant 0$, using lemma \ref{ana}, we need to show that all the coefficients of the expansion of $G(r)$ around $r=0$ are nonnegative.
 
First note that for $r$ with small enough $\vert r \vert$ we have $\Vert x_{r} \Vert_{2} < 1$ (the sum is a finite sum). So 
$$(P_{r}(a))^{2} \log P_{r}(a)=
(1+x_{r})^{2} \log (1+x_{r}) $$
$$=(1+2x_{r}+ x_{r}^{2})
(1-\dfrac{1}{2}x_{r}^{2}+\dfrac{1}{3}x_{r}^{3}-\dfrac{1}{4}x_{r}^{4}+ \cdots ) $$
$$= x_{r}+ \dfrac{3}{2} x_{r}^{2} 
+ 2\sum_{k=3}^{\infty} (-1)^{k-1} x_{r}^{k} \dfrac{(k-3)!}{k!}.$$
So 
\begin{equation} \label{gr}
G(r)=\underset{(m,n)\in \mathbb{Z}^{2}}{\sum}  r^{2(\vert m\vert +\vert n\vert)} (\vert m\vert + \vert n\vert)\vert a_{m,n} \vert^{2} 
+  \dfrac{1}{2} \Vert x_{r} \Vert _{2}^{2}
\end{equation}

$$-  \tau(x_{r})- \dfrac{3}{2}\tau(x_{r}^{2})+ 2\sum_{k=3} ^{\infty} (-1)^{k}\dfrac{(k-3)!}{k!} \tau (x_{r}^{k}). $$
Using the facts that $\tau(x_{r})=0$ and $$\tau (x_{r}^{2})= \Vert x_{r}\Vert^{2}_{2}= \underset{\underset{(m,n)\neq (0,0)}
{(m,n)\in \mathbb{Z}^{2}}}{\sum}  r^{2(\vert m\vert +\vert n\vert)} \vert a_{m,n}\vert ^{2}$$
in (\ref{gr}) we get
\begin{equation} \label{gro}
G(r)= 2\underset{\underset{m\geqslant0,n\geqslant0}
{(m,n)\in \mathbb{Z}^{2}}}{\sum}  ( m + n -1)r^{2( m + n)} \vert a_{m,n}\vert ^{2} +2\sum_{k=3}^{\infty} g_{k}(r)
\end{equation}
where $g_{k}(r)=(-1)^{k}\dfrac{(k-3)!}{k!} \tau (x_{r}^{k})$. Now we try to find the Taylor expansion of $\tau(x_{r}^{k})$.

First we need to fix some notations. Let $$ M=\lbrace (m,n) \in \mathbb{Z}^{2}: \, (m,n)\neq 0,a_{m,n} \neq0 \rbrace,$$
$$M_{1}=\lbrace (m,n) \in M : \, m\geqslant0, n\geqslant0 \rbrace,$$
$$M_{2}=\lbrace (m,n) \in M : \, m>0, n<0 \rbrace.$$For a function $P:M \longrightarrow \mathbb{Z}_{0}^{+}$, we put 
$$
M_{P}= \left \{ (m,n) \in M : \, P(m,n) \neq  0 \right \}.
$$
So $(M_{P},P)$ is a multiset. Indeed, the multiplicity of $(m,n)$ is $P(m,n)$.
Moreover, let $\mathcal{S} (M_{P})$ be the set of all permutations of the multiset $(M_{P},P)$. For $\sigma \in \mathcal{S}(M_{P})$, by $\sigma_{1}(m,n)$ and $\sigma_{2}(m,n)$ we mean the first component of $\sigma(m,n)$ and the second component of that respectively.
Let $I_{k}$ be the set of all functions $P:M \longrightarrow \mathbb{Z}_{0}^{+}$
such that $$\underset{(m,n) \in M}{\sum} P(m,n)=k,$$
and $I_{k,0}$ be the set of all functions in $I_{k}$ such that 
$$
\underset{(m,n) \in M}{\sum} P(m,n)m=\underset{(m,n) \in M}{\sum} P(m,n)n=0.
$$
 For $P:M \longrightarrow \mathbb{Z}_{0}^{+}$, we also define  $$Q_{P}:M_{1}\cup M_{2} \longrightarrow \mathbb{Z}_{0}^{+}$$ by
$Q_{P}(m,n)=P(-m,-n)$.
 
 Using the  multinomial expansion of $x_{r}$ we have
$$x_{r}^{k}=\sum_{P \in I_{k}} \left( \underset{(m,n)\in M}{\prod} \left( a_{m,n}r^{\vert m\vert + \vert n \vert} \right)^{P(m,n)}\right)
$$
$$\times \left(\underset{\sigma \in \mathcal{S}(M_{P})} {\sum}
  \prod_{i=1}^{k} U^{\sigma_{1}(m_{i}^{P},n_{i}^{P})} V^{\sigma_{2}(m_{i}^{P},n_{i}^{P})}
  \right)$$
where $(m_{i}^{P},n_{i}^{P})$, for $i=1,2, \cdots k$, is a labeling of elements of $M_{P}$ when $P \in I_{k}$. 
Then $$ \tau (x_{r}^{k})=\sum_{P \in I_{k,0}} \left( \underset{(m,n)\in M}{\prod} \left( a_{m,n}r^{\vert m\vert + \vert n \vert} \right)^{P(m,n)}\right)
$$
$$\times \tau \left(\underset{\sigma \in \mathcal{S}(M_{P})} {\sum}
  \prod_{i=1}^{k} U^{\sigma_{1}(m_{i}^{P},n_{i}^{P})} V^{\sigma_{2}(m_{i}^{P},n_{i}^{P})}
  \right).$$
 
 So we have
 $$ \tau (x_{r}^{k})=\sum_{P \in I_{k,0}}  \underset{(m,n)\in M_{1}}{\prod} \left( a_{m,n}r^{ m +  n} \right)^{P(m,n)}
 \underset{(-m,-n)\in M_{1}}{\prod} \left( a_{m,n}r^{ -m -  n} \right)^{P(m,n)}
$$
$$ \times \underset{(m,n)\in M_{2}}{\prod} \left( a_{m,n}r^{ m -  n} \right)^{P(m,n)}
 \underset{(-m,-n)\in M_{2}}{\prod} \left( a_{m,n}r^{ n-m} \right)^{P(m,n)}
$$
$$\times\tau \left(\underset{\sigma \in \mathcal{S}(M_{P})} {\sum}
  \prod_{i=1}^{k} U^{\sigma_{1}(m_{i}^{P},n_{i}^{P})} V^{\sigma_{2}(m_{i}^{P},n_{i}^{P})}
  \right)
$$
Then,
$$ \tau (x_{r}^{k})=\sum_{P \in I_{k,0}}  \underset{(m,n)\in M_{1}}{\prod} \left( a_{m,n}r^{ m +  n} \right)^{P(m,n)}
 \underset{(m,n)\in M_{1}}{\prod} \left( a_{-m,-n}r^{ m+n} \right)^{Q_{P}(m,n)}
$$
$$ \times \underset{(m,n)\in M_{2}}{\prod} \left( a_{m,n}r^{ m -  n} \right)^{P(m,n)}
 \underset{(m,n)\in M_{2}}{\prod} \left( a_{-m,-n}r^{ m-n} \right)^{Q_{P}(m,n)}
$$
$$\times\tau \left(\underset{\sigma \in \mathcal{S}(M_{P})} {\sum}
  \prod_{i=1}^{k} U^{\sigma_{1}(m_{i}^{P},n_{i}^{P})} V^{\sigma_{2}(m_{i}^{P},n_{i}^{P})}
  \right).
$$

Then since $a$ is self-adjoint, using (\ref{sa}) we have
\begin{equation} \label{mmm}
\tau (x_{r}^{k})=\sum_{(P,Q) \in H_{k}}  \underset{(m,n)\in M_{1}}{\prod} \left( a_{m,n}r^{ m +  n} \right)^{P(m,n)}
 \underset{(m,n)\in M_{1}}{\prod} \left( \overline{a_{m,n}}r^{ m+n} \right)^{Q(m,n)}
\end{equation}

$$\times \underset{(m,n)\in M_{2}}{\prod} \left( a_{m,n}r^{ m -  n} \right)^{P(m,n)}
 \underset{(m,n)\in M_{2}}{\prod} \left( \overline{a_{m,n}}r^{ m-n} \right)^{Q(m,n)}
$$
$$ \times e^{(-2 \pi i\theta  \underset{(m,n)\in M_{1}}{\sum} Q(m,n)mn)}
e^{(-2 \pi i\theta  \underset{(m,n)\in M_{2}}{\sum} Q(m,n)mn)}  
$$
$$
\times \tau \left(\underset{\sigma \in \mathcal{S}(M_{P,Q})} {\sum}
  \prod_{i=1}^{k} U^{\sigma_{1}(m_{i}^{P,Q},n_{i}^{P,Q})} V^{\sigma_{2}(m_{i}^{P,Q},n_{i}^{P,Q})}
  \right),
$$
where $H_{k}$ is the set of all pairs $(P,Q)$ such that $$P:M_{1}\cup M_{2} \longrightarrow \mathbb{Z}_{0}^{+},$$  $$Q:M_{1}\cup M_{2} \longrightarrow \mathbb{Z}_{0}^{+},$$ 
\begin{equation}\label{8}
\underset{(m,n) \in M_{1} \cup M_{2}}{\sum} P(m,n)n =\underset{(m,n) \in M_{1} \cup M_{2}}{\sum} Q(m,n)n,
\end{equation}
\begin{equation}\label{9}
\underset{(m,n) \in M_{1} \cup M_{2}}{\sum} P(m,n)m =\underset{(m,n) \in M_{1} \cup M_{2}}{\sum} Q(m,n)m,
\end{equation}

\begin{equation}\label{10}
\underset{(m,n) \in M_{1}\cup M_{2}}{\sum} P(m,n) + \underset{(m,n) \in M_{1}\cup M_{2}}{\sum} Q(m,n)=k
\end{equation}
and
$(M_{P,Q}, [P,Q])$ is a multiset defined by $$
M_{P,Q}=M_{P}^{1,2} \cup M_{Q}^{-1,-2}
$$
where
$$
M_{P}^{1,2}=\left \{ (m,n) \in M_{1} \cup M_{2} :\, P(m,n) \neq  0 \right \},
$$

$$
M_{Q}^{-1,-2}=\left \{ (m,n) \in M :\,(m,n) \notin M_{1} \cup M_{2},  Q(-m,-n) \neq  0 \right \}
$$ 
and
$$
[P,Q]:M_{P,Q} \longrightarrow \mathbb{Z}_{0}^{+},
$$ 
is defined by
$$[P,Q](m,n)=\left\{\begin{matrix}
P(m,n) &  (m,n) \in M_{P}^{1,2} \\
Q(-m,-n) & (m,n) \in M_{Q}^{-1,-2}
\end{matrix}\right.
$$
Also, $(m_{i}^{P,Q},n_{i}^{P,Q})$ for $i=1,2, \cdots,k$ is a labeling for elements of $M_{P,Q}$.

Now we show that for $(P,Q) \in H_{k}$,
\begin{equation}\label{qlm}
\tau \left(\underset{\sigma \in \mathcal{S}(M_{P,Q})} {\sum}
  \prod_{i=1}^{k} U^{\sigma_{1}(m_{i}^{P,Q},n_{i}^{P,Q})} V^{\sigma_{2}(m_{i}^{P,Q},n_{i}^{P,Q})}
  \right)
\end{equation}
  $$=e^{\pi i \theta \underset{(m,n) \in M_{1}\cup M_{2}}{\sum} (P(m,n)+ Q(m,n))mn}
B_{P,Q},$$
where $B_{P,Q}$ is a real number which depends on $P$ and $Q$. Indeed,
for $(P,Q) \in H_{k}$ and $\sigma \in \mathcal{S}(M_{P,Q})$ we have (for simplicity we drop the superscript $P,Q$)
\begin{equation} \label{wwe}
\prod_{i=1}^{k} U^{\sigma_{1}(m_{i},n_{i})} V^{\sigma_{2}(m_{i},n_{i})}= e^{2 \pi i \theta B_{\sigma} } U^{ (\sum_{i=1}^{k}\sigma_{1}(m_{i},n_{i}))}V^{(\sum_{i=1}^{k}\sigma_{2}(m_{i},n_{i}))},
\end{equation}
where 
$$
B_{\sigma}=-\sigma_{1}(m_{2},n_{2})\sigma_{2}(m_{1},n_{1})
$$
$$
-\sigma_{1}(m_{3},n_{3})\left [\sigma_{2}(m_{1},n_{1})
+\sigma_{2}(m_{2},n_{2}) \right]
$$ 
$$
-\sigma_{1}(m_{4},n_{4})
\left[ \sigma_{2}(m_{1},n_{1})+ 
\sigma_{2}(m_{2},n_{2})+
\sigma_{2}(m_{3},n_{3}) \right]- \cdots
$$
$$
-\sigma_{1}(m_{k},n_{k})
\left[ \sigma_{2}(m_{1},n_{1})+ 
\sigma_{2}(m_{2},n_{2})+
\sigma_{2}(m_{3},n_{3})
+ \cdots 
+
\sigma_{2}(m_{k-1},n_{k-1}) \right].
$$
Since $(P,Q) \in H_{k}$, (\ref{8}) and (\ref{9}) implies

\begin{equation} \label{qwe}
\sum_{i=1}^{k}\sigma_{1}(m_{i},n_{i})=0
\end{equation}
and
\begin{equation} \label{qwr}
\sum_{i=1}^{k}\sigma_{2}(m_{i},n_{i})=0.
\end{equation}

So using (\ref{wwe}),
\begin{equation} \label{wsa}
\tau (\prod_{i=1}^{k} U^{\sigma_{1}(m_{i},n_{i})} V^{\sigma_{2}(m_{i},n_{i})})= e^{2 \pi i \theta B_{\sigma}}.
\end{equation}

Let

$$
A_{\sigma}=\sigma_{1}(m_{1},n_{1})\left [\sigma_{2}(m_{1},n_{1})
+\sigma_{2}(m_{2},n_{2})+ \cdots 
\sigma_{2}(m_{k},n_{k}) \right]
$$ 
$$+\sigma_{1}(m_{2},n_{2})\left [\sigma_{2}(m_{2},n_{2})+ \sigma_{2}(m_{3},n_{3})+ \cdots 
\sigma_{2}(m_{k},n_{k}) \right]
$$
$$
+\sigma_{1}(m_{3},n_{3})\left [\sigma_{2}(m_{3},n_{3})+ \sigma_{2}(m_{4},n_{4})+ \cdots 
\sigma_{2}(m_{k},n_{k}) \right]+\cdots
$$
$$
+\sigma_{1}(m_{k-1},n_{k-1})\left [\sigma_{2}(m_{k-1},n_{k-1})+ \sigma_{2}(m_{k},n_{k}) \right]
$$
$$
+\sigma_{1}(m_{k},n_{k})\sigma_{2}(m_{k},n_{k}).
$$
Using (\ref{qwe}) and (\ref{qwr}), one can easily check that $B_{\sigma}-A_{\sigma}=0$. So 
\begin{equation} \label{nmh}
B_{\sigma}=\dfrac{1}{2}(B_{\sigma}+A_{\sigma}).
\end{equation}

We also set 
$$D_{\sigma}= \sum_{j=2}^{k}\sum_{i=1}^{j-1} \left[\sigma_{1}(m_{i},n_{i})\sigma_{2}(m_{j},n_{j})-\sigma_{1}(m_{j},n_{j})\sigma_{2}(m_{i},n_{i})\right].$$
One can easily see that 
\begin{equation} \label{dsi}
D_{\sigma}= \sum_{j=1}^{k-1}\sum_{i=j+1}^{k} \left[\sigma_{1}(m_{j},n_{j})\sigma_{2}(m_{i},n_{i})-\sigma_{1}(m_{i},n_{i})\sigma_{2}(m_{j},n_{j})\right].
\end{equation}
Then we see that
$$
B_{\sigma}+A_{\sigma}=D_{\sigma}+\sum_{i=1}^{k}\sigma_{1}(m_{i},n_{i})\sigma_{2}(m_{i},n_{i}) $$
$$=D_{\sigma}+\sum_{(m,n)\in M_{P,Q}} [P,Q](m,n)mn
$$
$$
=D_{\sigma}+\sum_{(m,n)\in M_{P,Q}^{1,2}}P(m,n)mn+\sum_{(m,n)\in M_{P,Q}^{-1,-2}}Q(-m,-n)mn
$$
$$=D_{\sigma}+\sum_{(m,n)\in M_{1} \cup M_{2}}P(m,n)mn+\sum_{(-m,-n)\in M_{1} \cup M_{2}}Q(-m,-n)mn
$$
$$=D_{\sigma}+\sum_{(m,n)\in M_{1} \cup M_{2}}P(m,n)mn+\sum_{(m,n)\in M_{1} \cup M_{2}}Q(m,n)mn
$$
Therefore, by (\ref{nmh}) we have
$$
B_{\sigma}=\dfrac{1}{2}\left [D_{\sigma}+\sum_{(m,n)\in M_{1} \cup M_{2}}(P(m,n)+Q(m,n))mn \right].
$$
Then regarding (\ref{wsa}) we have 
$$
\tau (\prod_{i=1}^{k} U^{\sigma_{1}(m_{i},n_{i})} V^{\sigma_{2}(m_{i},n_{i})})= e^{2 \pi i \theta B_{\sigma}}$$
$$
=e^{\pi i \theta \underset{(m,n)\in M_{1}\cup M_{2}}{\sum}(P(m,n)+Q(m,n))mn} e^{\pi i \theta D_{\sigma}}
$$
Now if we define 
$$
B_{P,Q}=\underset{\sigma \in \mathcal{S}(M_{P,Q})} {\sum}e^{\pi i \theta D_{\sigma}},
$$
we see that 
$$
\tau \left(\underset{\sigma \in \mathcal{S}(M_{P,Q})} {\sum}
  \prod_{i=1}^{k} U^{\sigma_{1}(m_{i}^{P,Q},n_{i}^{P,Q})} V^{\sigma_{2}(m_{i}^{P,Q},n_{i}^{P,Q})}
  \right).
$$
$$=e^{\pi i \theta \underset{(m,n) \in M_{1}\cup M_{2}}{\sum} (P(m,n)+ Q(m,n))mn}
B_{P,Q}.$$
So we have proved (\ref{qlm}). Now we 
will show that $B_{P,Q}$ is a real number. Indeed, for $\sigma \in \mathcal{S}(M_{P,Q})$, we define $\beta _{\sigma} \in \mathcal{S}(M_{P,Q})$ by $$\beta _{\sigma}(m_i,n_i)=\sigma(m_{k-i+1},n_{k-i+1})\, \,\,\,\, i=1,2, \cdots, k.$$
Then we have
$$
D_{\beta_{\sigma}}= \sum_{j=2}^{k}\sum_{i=1}^{j-1} \left[{\beta_{\sigma}}_{1}(m_{i},n_{i}){\beta_{\sigma}}_{2}(m_{j},n_{j})-{\beta_{\sigma}}_{1}(m_{j},n_{j}){\beta_{\sigma}}_{2}(m_{i},n_{i})\right]
$$
$$=\sum_{j=2}^{k}\sum_{i=1}^{j-1} \sigma_{1}(m_{k-i+1},n_{k-i+1})\sigma_{2}(m_{k-j+1},n_{k-j+1})$$
$$-\sum_{j=2}^{k}\sum_{i=1}^{j-1}\sigma_{1}(m_{k-j+1},n_{k-j+1})\sigma_{2}(m_{k-i+1},n_{k-i+1})$$
$$= \sum_{t=k-1}^{1}\sum_{s=k}^{t+1} \left[\sigma_{1}(m_{s},n_{s})\sigma_{2}(m_{t},n_{t})-\sigma_{1}(m_{t},n_{t})\sigma_{2}(m_{s},n_{s})\right]$$
$$= \sum_{t=1}^{k-1}\sum_{s=t+1}^{k} \left[\sigma_{1}(m_{s},n_{s})\sigma_{2}(m_{t},n_{t})-\sigma_{1}(m_{t},n_{t})\sigma_{2}(m_{s},n_{s})\right]$$
$$= -\sum_{t=1}^{k-1}\sum_{s=t+1}^{k} \left[\sigma_{1}(m_{t},n_{t})\sigma_{2}(m_{s},n_{s})-\sigma_{1}(m_{s},n_{s})\sigma_{2}(m_{t},n_{t})\right]=-D_{\sigma},$$
where in the last equality we have used (\ref{dsi}). Now we have
$$
B_{P,Q}=\underset{\sigma \in \mathcal{S}(M_{P,Q})} {\sum}e^{\pi i \theta D_{\sigma}}=\dfrac{1}{2}(\underset{\sigma \in \mathcal{S}(M_{P,Q})} {\sum}e^{\pi i \theta D_{\sigma}}+\underset{\sigma \in \mathcal{S}(M_{P,Q})} {\sum}e^{\pi i \theta D_{\sigma}})
$$
$$
=\dfrac{1}{2}(\underset{\sigma \in \mathcal{S}(M_{P,Q})} {\sum}e^{\pi i \theta D_{\sigma}}+\underset{\sigma \in \mathcal{S}(M_{P,Q})} {\sum}e^{\pi i \theta D_{\beta_{\sigma}}})
$$
$$
=\dfrac{1}{2}(\underset{\sigma \in \mathcal{S}(M_{P,Q})} {\sum}e^{\pi i \theta D_{\sigma}}+\underset{\sigma \in \mathcal{S}(M_{P,Q})} {\sum}e^{-\pi i \theta D_{\sigma}})=\dfrac{1}{2}\underset{\sigma \in \mathcal{S}(M_{P,Q})} {\sum}(e^{\pi i \theta D_{\sigma}}+e^{-\pi i \theta D_{\sigma}})
$$
So $B_{P,Q}$ is real.

Now using (\ref{mmm}), we see
$$
\tau (x_{r}^{k})=\sum_{(P,Q) \in H_{k}}  \underset{(m,n)\in M_{1}}{\prod} \left( a_{m,n}r^{ m +  n} \right)^{P(m,n)}
 \underset{(m,n)\in M_{1}}{\prod} \left( \overline{a_{m,n}}r^{ m+n} \right)^{Q(m,n)}
$$

$$\underset{(m,n)\in M_{2}}{\prod} \left( a_{m,n}r^{ m -  n} \right)^{P(m,n)}
 \underset{(m,n)\in M_{2}}{\prod} \left( \overline{a_{m,n}}r^{ m-n} \right)^{Q(m,n)}
$$
$$ e^{-2 \pi i\theta  \underset{(m,n)\in M_{1} \cup M_{2}}{\sum} Q(m,n)mn}
e^{\pi i \theta \underset{(m,n) \in M_{1}\cup M_{2}}{\sum} (P(m,n)+ Q(m,n))mn}
B_{P,Q}$$

Then we have
$$
\tau (x_{r}^{k})=\sum_{(P,Q) \in H_{k}} 
r^{\left(\underset{(m,n) \in M_{1}}{\sum} \left( P(m,n)+Q(m,n) \right) (m+n)+ \underset{(m,n) \in M_{2}}{\sum} \left( P(m,n)+Q(m,n) \right) (m-n)\right)} 
$$
$$
e^{ \pi i\theta  \underset{(m,n)\in M_{1} \cup M_{2}}{\sum} P(m,n)mn}
e^{- \pi i\theta  \underset{(m,n)\in M_{1} \cup M_{2}}{\sum} Q(m,n)mn}
$$
$$
\underset{(m,n)\in M_{1} \cup M_{2}}{\prod} a_{m,n}^{P(m,n)} 
 \underset{(m,n)\in M_{1} \cup M_{2}}{\prod}  \overline{a_{m,n}}^{Q(m,n)} B_{P,Q}
$$
$$
=\sum_{(P,Q) \in H_{k}}
r^{\left(\underset{(m,n) \in M_{1} \bigcup M_{2} }{\sum} P(m,n)m+\underset{(m,n) \in M_{1} \bigcup M_{2} }{\sum} Q(m,n)m \right)}
$$
$$ r^{ \left( \underset{(m,n) \in M_{1}}{\sum} P(m,n)n-\underset{(m,n) \in M_{2}}{\sum} Q(m,n)n +\underset{(m,n) \in M_{1}}{\sum} Q(m,n)n-\underset{(m,n) \in M_{2}}{\sum} P(m,n)n\right)} 
$$
$$
e^{ \pi i\theta  \underset{(m,n)\in M_{1} \cup M_{2}}{\sum} P(m,n)mn}
e^{- \pi i\theta  \underset{(m,n)\in M_{1} \cup M_{2}}{\sum} Q(m,n)mn}
$$
$$
\underset{(m,n)\in M_{1} \cup M_{2}}{\prod} a_{m,n}^{P(m,n)} 
 \underset{(m,n)\in M_{1} \cup M_{2}}{\prod}  \overline{a_{m,n}}^{Q(m,n)} B_{P,Q}
$$
So
\begin{equation} \label{aaa}
\tau (x_{r}^{k})=\sum_{\underset{l\geq 0, s\geq 0}{l+s=2}}^{\infty} r^{2(l+s)} 
 \sum_{(P,Q) \in G_{l,s}}
e^{ \pi i\theta  \underset{(m,n)\in M_{1} \cup M_{2}}{\sum} P(m,n)mn}
\end{equation}
$$
\times e^{- \pi i\theta  \underset{(m,n)\in M_{1} \cup M_{2}}{\sum} Q(m,n)mn}\underset{(m,n)\in M_{1} \cup M_{2}}{\prod} a_{m,n}^{P(m,n)} 
 \underset{(m,n)\in M_{1} \cup M_{2}}{\prod}  \overline{a_{m,n}}^{Q(m,n)} B_{P,Q}
$$
 where $G_{l,s}$ is the set of all pairs $(P,Q)$ such that $$P:M_{1}\cup M_{2} \longrightarrow \mathbb{Z}_{0}^{+},$$  $$Q:M_{1}\cup M_{2} \longrightarrow \mathbb{Z}_{0}^{+},$$ and
$$
\underset{(m,n) \in M_{1}\cup M_{2}}{\sum} P(m,n) + \underset{(m,n) \in M_{1}\cup M_{2}}{\sum} Q(m,n)=k,
$$
$$
\underset{(m,n) \in M_{1}\cup M_{2}}{\sum} P(m,n)m = \underset{(m,n) \in M_{1} \cup M_{2}}{\sum} Q(m,n)m =l,
$$
$$\underset{(m,n) \in M_{1}}{\sum} P(m,n)n- \underset{(m,n) \in M_{2}}{\sum} Q(m,n)n$$
$$=\underset{(m,n) \in M_{1}}{\sum} Q(m,n)n- \underset{(m,n) \in M_{2}}{\sum} P(m,n)n=s.
$$
One should note that in (\ref{aaa}) $(l+s)$ starts from $2$. Here we shall show why that is the case:
for $(m,n) \in M_{1}$ if $m=0$, then $n \neq0$. So we have
\begin{equation}
\underset{(m,n) \in M_{1}}{\sum} P(m,n)\leqslant \underset{(m,n) \in M_{1}}{\sum} P(m,n)m+\underset{(m,n) \in M_{1}}{\sum} P(m,n)n.
\end{equation}
Similarly we have 
\begin{equation} \label{bbb}
\underset{(m,n) \in M_{2}}{\sum} P(m,n)\leqslant \underset{(m,n) \in M_{2}}{\sum} P(m,n)m-\underset{(m,n) \in M_{2}}{\sum} P(m,n)n,
\end{equation}
\begin{equation} \label{ccc}
\underset{(m,n) \in M_{1}}{\sum} Q(m,n)\leqslant \underset{(m,n) \in M_{1}}{\sum} Q(m,n)m+\underset{(m,n) \in M_{1}}{\sum} Q(m,n)n,
\end{equation}
\begin{equation} \label{ddd}
\underset{(m,n) \in M_{2}}{\sum} Q(m,n)\leqslant \underset{(m,n) \in M_{2}}{\sum} Q(m,n)m-\underset{(m,n) \in M_{2}}{\sum} Q(m,n)n.
\end{equation}
So $$k=\underset{(m,n) \in M_{1}\cup M_{2}}{\sum} P(m,n) + \underset{(m,n) \in M_{1}\cup M_{2}}{\sum} Q(m,n)\leqslant$$
$$\underset{(m,n) \in M_{1}\cup M_{2}}{\sum} P(m,n)m + \underset{(m,n) \in M_{1} \cup M_{2}}{\sum} Q(m,n)m $$
$$ +\underset{(m,n) \in M_{1}}{\sum} P(m,n)n- \underset{(m,n) \in M_{2}}{\sum} Q(m,n)n$$
$$+\underset{(m,n) \in M_{1}}{\sum} Q(m,n)n- \underset{(m,n) \in M_{2}}{\sum} P(m,n)n$$
$$=2(s+l)$$
So for a fixed $k$, $\dfrac{k}{2} \leqslant l+s$ and since $k$ is at least $3$, $l+s \geqslant 2$.
 
Let $ \tilde{G}_{l,s}$ be the set of all pairs $(\tilde{P},\tilde{Q})$ such that $$\tilde{P}:M\longrightarrow \mathbb{Z}_{0}^{+},$$  $$\tilde{Q}:M \longrightarrow \mathbb{Z}_{0}^{+},$$ and
$$
\underset{(m,n) \in M_{1}\cup M_{2}}{\sum} \tilde{P}(m,n) + \underset{(m,n) \in M_{1}\cup M_{2}}{\sum} \tilde{Q}(m,n)=k,
$$
$$
\underset{(m,n) \in M_{1}\cup M_{2}}{\sum} \tilde{P}(m,n)m = \underset{(m,n) \in M_{1} \cup M_{2}}{\sum} \tilde{Q}(m,n)m =l,
$$
$$\underset{(m,n) \in M_{1}}{\sum} \tilde{P}(m,n)n- \underset{(m,n) \in M_{2}}{\sum} \tilde{Q}(m,n)n$$
$$=\underset{(m,n) \in M_{1}}{\sum} \tilde{Q}(m,n)n- \underset{(m,n) \in M_{2}}{\sum} \tilde{P}(m,n)n=s.
$$
There exist a one to one correspondence between $ \tilde{G}_{l,s}$ and $ G_{l,s}$.
Infact, for $(P,Q)\in G_{l,s}$, we can define 
$$\tilde{P}(m,n)=\left\{\begin{matrix}
P(m,n) & (m,n)\in M_{1}\cup M_{2} \\ 
Q(-m,-n) & (m,n)\notin M_{1}\cup M_{2}
\end{matrix}\right.$$
and 
$$\tilde{Q}(m,n)=\left\{\begin{matrix}
Q(m,n) & (m,n)\in M_{1}\cup M_{2} \\ 
P(-m,-n) & (m,n)\notin M_{1}\cup M_{2}
\end{matrix}\right.$$

Using this correspondence and the fact that $B_{P,Q}=B_{\tilde{P}\restriction_{M_{1}\cup M_{2}} ,\tilde{Q}\restriction_{M_{1}\cup M_{2}}}$ in (\ref{aaa}), we have
\begin{equation} \label{tyu}
\tau (x_{r}^{k})=\sum_{\underset{l\geq 0, s\geq 0}{l+s=2}}^{\infty} r^{2(l+s)} 
 \sum_{(\tilde{P},\tilde{Q}) \in \tilde{G}_{l,s}}
e^{ \pi i\theta  \underset{(m,n)\in M_{1} \cup M_{2}}{\sum} \tilde{P}(m,n)mn}
\end{equation}
$$
\times e^{- \pi i\theta  \underset{(m,n)\in M_{1} \cup M_{2}}{\sum} \tilde{Q}(m,n)mn}\underset{(m,n)\in M_{1} \cup M_{2}}{\prod} a_{m,n}^{\tilde{P}(m,n)} 
 \underset{(m,n)\in M_{1} \cup M_{2}}{\prod}  \overline{a_{m,n}}^{\tilde{Q}(m,n)} B_{\tilde{P},\tilde{Q}}.
$$

Now If we can decompose 
$B_{\tilde{P},\tilde{Q}}$ into two terms $B_{\tilde{P}}$ and  $B_{\tilde{Q}}$, i.e.
\begin{equation} \label{er}
B_{\tilde{P},\tilde{Q}}=B_{\tilde{P}}B_{\tilde{Q}}
\end{equation} 
such that $B_{\tilde{P}}$ and $B_{\tilde{Q}}$ depend only respectively on and $\tilde{P}$ and $\tilde{Q}$,
then we can easily continue the proof of Theorem \ref{man}. Indeed, If (\ref{er}) holds, for a function $$P:M \longrightarrow \mathbb{Z}_{0}^{+},$$  we can define 
\begin{equation} \label{C(P)}
D(P)= e^{\pi i \theta\underset{(m,n) \in M_{1}\cup M_{2}}{\sum} P(m,n)mn} \underset{(m,n)\in M_{1}\cup M_{2}}{\prod} \left( -a_{m,n} \right)^{P(m,n)} B_{P},
\end{equation} 
and the rest would be much similar to the proof of Theorem \ref{man}.


\end{document}